\pdfoutput=1 

\newcommand\forNmd[1]{#1} \newcommand\forLMS[1]{}


\forNmd{
\documentclass[utopia,fullpage]{nmd/article}

\newcommand{\negSpace}{\!}

\setlength{\parskip}{5pt}
\setlength{\parindent}{0pt}
}

\forLMS{
\documentclass{lms}

\usepackage{url}
\usepackage{color}
\usepackage{mathtools}
\newcommand\C{\mathbb C}
\newcommand\R{\mathbb R}
\newcommand\Q{\mathbb Q}
\renewcommand\H{\mathbb H}
\newcommand\Isom{\mathrm{Isom}}

\newtheorem{theorem}{Theorem}[section]
\newtheorem{lemma}[theorem]{Lemma}
\newtheorem{definition}[theorem]{Definition}
\newtheorem{example}[theorem]{Example}
\newtheorem{conjecture}[theorem]{Conjecture}
\newtheorem{remark}[theorem]{Remark}

\newcommand{\negSpace}{}

\newcommand{\urladdr}[1]{\url{#1}}
\newcommand{\subjclass}[1]{#1}

}

\usepackage{graphicx}
\usepackage{amsfonts}

\newcommand\myIm{\mathrm{Im}}

\newcommand\du[1]{\underline{\overline{#1}}}

\newcommand\myPGL[2]{\mathrm{PGL}_{#1}(#2)}
\newcommand\myPSL[2]{\mathrm{PSL}_{#1}(#2)}
\newcommand\mySL[2]{\mathrm{SL}_{#1}(#2)}
\newcommand\mySO[1]{\mathrm{SO}_{#1}}
\newcommand\trig{\mathcal{T}}
\newcommand\groupG{\mathcal{G}}
\newcommand\permS{\mathbf{S}}
\newcommand\permA{\mathbf{A}}

\newcommand\myExp[1]{\exp(#1)}
\newcommand\myI{\imath}
\newcommand\myJ{\jmath}

\newcommand\myPath{\Gamma}

\newcommand\edgeEqSol{\mathrm{Sol}}

\begin{document}

\title{Verified computations for\\ closed hyperbolic 3-manifolds}
\author{Matthias Goerner}
\email{enischte@gmail.com}
\urladdr{http://unhyperbolic.org/}

\subjclass[2010]{57M50, 65G40} 

\begin{abstract}
Extending methods first used by Casson, we show how to verify a hyperbolic structure on a finite triangulation of a closed 3-manifold using interval arithmetic methods. A key ingredient is a new theoretical result (akin to a theorem by Neumann-Zagier and Moser for ideal triangulations upon which HIKMOT is based) showing that there is a redundancy among the edge equations if the edges avoid ``gimbal lock''. We successfully test the algorithm on known examples such as the orientable closed manifolds in the Hodgson-Weeks census and the bundle census by Bell. We also tackle a previously unsolved problem and determine all knots and links with up to 14 crossings that have a hyperbolic branched double cover.
\end{abstract}

\maketitle

\setcounter{tocdepth}{1}

\tableofcontents

\section{Introduction}

Up to isometry, a finite hyperbolic 3-simplex is determined by its $6$ edge parameters, by which we mean either the edge lengths $l_{ij}$ or the respective entries of its vertex Gram matrix $v_{ij}=-\cosh(l_{ij})$. Thus, an assignment of a parameter to each edge of a finite triangulation $\trig$\negSpace{} of a closed 3-manifold determines a hyperbolic structure for each $3$-simplex of $\trig\negSpace{}$. If certain conditions are fulfilled, the hyperbolic structures on the individual simplices are compatible and form a hyperbolic structure on the manifold (see \cite{heardThesis} and Section~\ref{sec:hypStru}).

Existing software (such as Casson's Geo \cite{casson:geo} and Heard's Orb \cite{orb}) finds a numerical approximation for the edge parameters using Newton's method and reports whether the necessary equations are fulfilled within an error smaller than a certain $\varepsilon$. This suggests but does not prove hyperbolicity. The aim of this paper is to describe how to take such a numerical approximation and rigorously prove hyperbolicity by giving real intervals that are verified to contain a solution to all the necessary equations and inequalities. An algorithm either returning such intervals or (conservatively) reporting failure is described in Section~\ref{sec:algo}.
The algorithm is a hyperbolicity verification procedure but not a hyperbolicity decision procedure since its failure just means that the given candidate approximation was not close enough to a hyperbolic structure or needs to be perturbed to avoid ``gimbal lock'' (explained below). An implementation of this algorithm is available at \cite{veriClosedRepo}. Therefore, this paper is achieving for finite triangulations what Hoffman, Ichihara, Kashiwagi, Masai, Oishi, and Takayasu \cite{hikmot} did for ideal triangulations (HIKMOT's functionality has been integrated into SnapPy \cite{SnapPy} by the author since version~2.3). 

This is motivated by applications that benefit from using geometric finite triangulations in place of geometric spun triangulation. In particular, such applications no longer need to overcome the incompleteness locus. An example is the generation of cohomology fractals for a closed hyperbolic 3-manifold. As shown in  \cite[Figure~8.7]{cohomologyFractals}, the incompleteness locus produces artifacts in the raytraced image of a cohomology fractal which simply disappear when using finite triangulations instead. Another example is the algorithm proposed in \cite{HHGT} to rigorously compute the length spectrum for a hyperbolic 3-manifold. This algorithm requires tiling $\H^3$ with translates of a fundamental domain to cover a ball of specified radius and thus would not work if there is incompleteness locus.

An even more basic motivation is proving hyperbolicity of a closed 3-manifold by finding a triangulation admitting a geometric structure. Restricting ourselves to just spun triangulations introduces a bottleneck. For example, the obvious spun triangulation of a closed census manifold such as \texttt{m135(1,3)} can fail to be geometric. Thus finding a geometric spun triangulation requires drilling and filling (or, in other words, finding a different closed geodesic $\gamma$ such that there is a geometric triangulation spun about $\gamma$). Even worse, some hyperbolic 3-manifolds such as \texttt{m007(3,1)} seem to lack any geometric spun triangulation unless we pass to a cover\footnote{In not yet published work, Maria Trnkova has proven that there is no geometric spun triangulation of \texttt{m007(3,1)} with a small number of tetrahedra.}. Note that a geometric spun triangulation is known for every orientable closed manifold in the SnapPy census (except for \texttt{m007(3,1)} where a 3-fold cover is needed), see \cite{hikmot}. However, the process of finding such (covers admitting)  geometric spun triangulations can be tedious and is not known to be possible in general. Furthermore, passing to a cover can complicate applications such as computing the length spectrum.

Potential future work might generalize the techniques of this paper to Heard's work \cite{heardThesis} on 3-orbifolds and Frigerio and Petronio's work \cite{frigPet} on 3-manifolds with geodesic boundary. To find hyperbolic structures on these, Heard's program Orb \cite{orb} uses triangulations with finite as well as ideal and ``hyperinfinite'' vertices. Note that some of the theory in this paper also carries over to spherical and Euclidean geometry and might generalize to yield methods for verifying spherical or Euclidean structures on finite triangulations.

Like \cite{hikmot}, we use interval arithmetic methods such as the interval Newton method or the Krawczyk test. These methods can only show the existence of a solution to a system of equations if the Jacobian matrix is invertible near that solution. If the Jacobian fails to be invertible, these methods can only show the existence of a solution to a subset of the equations. This applies to the edge equations whether we are solving for shapes in the cusped case or for edge lengths in the closed case. Hence, in both cases, we need an additional result showing that there is a redundancy among the edge equations such that solving a suitable subset of them is sufficient. For ideal triangulations, this result is due to Neumann-Zagier \cite{NeumannZagier,NeumannComb} and Moser \cite{moser} (see Appendix). For finite triangulations, we derive such a result in this paper.

Note that while we actually have exactly as many variables as equations in the case of finite triangulations (namely, one per edge), the Jacobian of this system of equations has a kernel at a solution corresponding to a hyperbolic structure.
This is because we can move each individual finite vertex of a triangulation in the hyperbolic manifold and obtain a whole family of solutions (see Theorem~\ref{thm:hypStructAreSubmanifold}). Thus, we need to use a two-step strategy to verify a hyperbolic structure: First, we drop some edge equations and fix an equal number of edge parameters such that we can apply interval arithmetic methods to find intervals verified to contain a solution to the subsystem of equations we kept. Next, we show that this solution is also a solution to the equations we dropped earlier and thus that the intervals for the edge parameters contain a point giving a hyperbolic structure. Interval arithmetic can verify that the error of the dropped equations is small and we will show that if the dropped equations are fulfilled approximately, then they are fulfilled exactly provided that a certain condition we call ``gimbal lock'' is avoided.

To define gimbal lock, we will look at the complex of doubly-truncated simplices associated to the triangulation and an assignment of $\myPGL{2}{\C}$-matrices to the edges of the complex computed from the edge parameters (see Section~\ref{sec:cocycles}). The cocycle condition says that the matrices on the edges of a polygon must multiply to the identity. Since a subset of the edge equations is known to be fulfilled, the cocycle condition is known to hold for some polygons but not necessarily for others. The goal is to show that it holds for all polygons so that we get a $\myPGL{2}{\C}$-representation of the fundamental group (see Section~\ref{sec:vertexCocycles} and \ref{sec:extendCocycles} and examples in Section~\ref{section:examples}). Roughly speaking, the idea is that if the product of three small rotations about three axes in generic position is the identity, then each rotation must be the identity. Inspired by the mechanical device called gimbal (see Figure~\ref{fig:gimbal}), we say that we avoid ``gimbal lock'': if a gimbal is not in its locked position, then we can apply any small rotation to the inner-most ring, or equivalently, if we fix the inner-most ring, none of the other rings can be turned.

We describe the resulting algorithm to obtain real intervals in Section~\ref{sec:algo}. The algorithm is effective and able to verify a hyperbolic structure on all 36093 closed orientable manifolds in the Hodgson-Weeks census \cite{hwcensus} and in the census bundle by Bell \cite{bellBundleCensus}, see Section~\ref{sec:results}. 
Branched double covers of knots or links (or more precisely: double covers of $S^3$ branched over a knot or link) provide a good class of test cases since finding a geometric spun-triangulation of some of them can be challenging. Using finite triangulations instead, we are able to prove the following new result:
\begin{theorem} \label{thm:hyperbolicBranchedDoubleCovers}
Out of the 313230 knots with up to 15 crossings (not including the unknot), exactly 193839 have a hyperbolic branched double cover.\\
Out of the 120573 links with up to 14 crossings (with at least two components), exactly 37709 have a hyperbolic branched double cover.
\end{theorem}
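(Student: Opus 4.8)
The plan is to turn the statement into one large but finite computation, with four ingredients: a complete list of the knots and links involved, a construction of a triangulation of each branched double cover, a positive test for hyperbolicity, and a positive test for non-hyperbolicity.

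First I would take the standard knot and link tables --- prime knots through $15$ crossings and prime links with at least two components through $14$ crossings, as distributed with SnapPy (the Hoste--Thistlethwaite and Thistlethwaite tabulations). For each link $L$ I would build a finite triangulation of the double cover $\Sigma_2(L)$ of $S^3$ branched over $L$ as follows: take the index-two subgroup of $\pi_1(S^3\setminus L)$ cut out by the total linking number modulo $2$, form the corresponding double cover of the link exterior, Dehn fill its boundary tori so as to cap off the preimages of the meridian disks, triangulate the resulting closed manifold (with SnapPy or Regina), and simplify it with Pachner moves.

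Second, I would feed each $\Sigma_2(L)$ to the verification pipeline of this paper: use Casson's \texttt{Geo} or Heard's \texttt{Orb} to produce a candidate geometric finite triangulation with approximate edge parameters, perturb if needed to avoid gimbal lock, and then run the interval-arithmetic algorithm of Section~\ref{sec:algo}. Each successful run is a rigorous proof that the corresponding $\Sigma_2(L)$ is hyperbolic, and tallying the successes should give the asserted counts $193839$ and $37709$.

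Third --- and this is where the real work lies, since the present algorithm produces only positive certificates --- for every $L$ on which verification fails I must rigorously certify that $\Sigma_2(L)$ is \emph{not} hyperbolic. By the orbifold theorem, the orbifold $(S^3,L)$ with cone angle $\pi$ has a geometric decomposition and $\Sigma_2(L)$ is hyperbolic precisely when that orbifold is, so a failing $L$ is a torus link, a Montesinos link, a satellite, or carries an essential Conway sphere; correspondingly $\Sigma_2(L)$ is a Seifert fibered space with computable invariants, a nontrivial connected sum, or a manifold containing an essential torus (the preimage of the Conway sphere or companion torus). In each case I would extract and then rigorously verify the appropriate certificate --- an essential $2$-sphere, a Seifert fibration, or an essential torus --- using the normal-surface algorithms implemented in Regina (Rubinstein--Thompson $3$-sphere recognition, essential-torus detection, Seifert-fibered-space recognition). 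The main obstacle is the completeness and bookkeeping of this negative half: one must match every failed verification to a genuine topological obstruction, leaving nothing unresolved because an unlucky triangulation or perturbation merely made a hyperbolic manifold \emph{look} non-geometric, and double-check that no hyperbolic $\Sigma_2(L)$ was missed by a spurious failure of the interval-arithmetic step.
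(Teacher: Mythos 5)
Your proposal matches the paper's own proof: the paper runs exactly this computation, tabulating the Hoste--Thistlethwaite knots and links, building each branched double cover in SnapPy, certifying hyperbolicity with Orb-produced edge parameters fed into the interval-arithmetic algorithm of Section~\ref{sec:algo}, and certifying non-hyperbolicity with Regina (census lookup, \texttt{StandardTriangulation} recognition such as Seifert fibered structures, and essential torus detection via normal surfaces). Your added framing via the orbifold theorem and your caution about the completeness of the negative half are sensible but do not change the method, which is essentially identical to the paper's.
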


Section~\ref{sec:discussion} concludes with a conjecture that implies that a hyperbolic structure on a finite triangulation can always be perturbed so that the algorithm can verify it.

The appendix in Section~\ref{sec:hikmotGap} points out a gap in the argument (but not the algorithm) of the HIKMOT paper.

\section*{Acknowledgements}


The author wishes to thank Marc Culler, Nathan Dunfield, Damian Heard, Neil Hoffman, and Christian Zickert for helpful discussions. Special thanks goes to Nathan Dunfield for finding some mistakes and providing some of the examples and some Python scripts for Regina.


\section{Hyperbolic structures on finite triangulations} \label{sec:hypStru}

Consider the isometry class of a positively oriented, finite geodesic simplex $\Delta$ with vertices labeled $0, \dots, 3$ in $\H^3$. We briefly review the relationship of the edge lengths and the angles of $\Delta$ following \cite{heardThesis} with one difference though: we use a slightly simpler definition for the vertex Gram matrix $G$ where all diagonal entries are $-1$ since we are not interested in generalized simplices here. To be consistent with the vertex labels, we $0$-index the rows and columns of a matrix (so $m_{00}$ denotes the top left-most entry). 

\begin{figure}[htb]
\begin{minipage}{\textwidth}
\begin{center}
\begin{minipage}{0.32\textwidth}
\begin{center}
\scalebox{1.0}{
\input{figures_gen/anglesInTet.tex}
}
\end{center}
\end{minipage}
\begin{minipage}{0.32\textwidth}
\begin{center}
\scalebox{0.9}{
\input{figures_gen/doublyTruncatedCocycle.tex}}
\end{center}
\end{minipage}
\begin{minipage}{0.32\textwidth}
\begin{center}
\scalebox{0.9}{
\input{figures_gen/doublyTruncatedCocycle_Prism.tex}}
\end{center}
\end{minipage}\\ 
\begin{minipage}[t]{0.36\textwidth}
\caption{Angles of simplex.} \label{fig:anglesInTet}
\end{minipage}
\begin{minipage}[t]{0.35\textwidth}
\caption{A doubly truncated simplex $\overline{\Delta}$, also known as permutahedron.}\label{fig:DoublyTruncated} \end{minipage}
\begin{minipage}[t]{0.26\textwidth}
\caption{A prism.}\label{fig:Prism}
\end{minipage}
\end{center}
\end{minipage}
\end{figure}

Let $l_{ij}$ denote the length of the edge between vertex $i$ and $j$. The vertex Gram matrix $G$ associated to the simplex is the symmetric $4\times 4$-matrix with entries $$v_{ij}=-\cosh(l_{ij}).$$ The edge lengths as well as the vertex Gram matrix uniquely determine the isometry class of the simplex. Let $c_{ij}$ denote the respective cofactor of $G$ which is given by
$$c_{ij}=(-1)^{i+j} \det(G_{ij}), $$
where $G_{ij}$ is obtained by deleting the $i$-th row and $j$-th column. The dihedral angle between face $i$ and $j$ and the angle at vertex $i$ of the triangle $ijk$ (derived from the law of cosines) are then given by (also see Figure~\ref{fig:anglesInTet}):
\begin{equation}
\theta_{ij}=\arccos\left(\frac{c_{ij}}{\sqrt{c_{ii}c_{jj}}}\right) \quad\mbox{and}\quad \eta_{i,jk}=\arccos\left(\frac{v_{ij}v_{ik}+v_{jk}}{\sqrt{v_{ij}^2-1}\sqrt{v_{ik}^2-1}}\right). \label{eqn:dihedral}
\end{equation}

\begin{definition}
Let $G$ be a real symmetric $4\times 4$-matrix with $-1$ on the diagonal. We say that $G$ is realized if $G$ is the vertex Gram matrix of some finite, non-flat simplex.
\end{definition}

The following theorem is a special case of \cite[Theorem~1.5]{heardThesis} (also compare to \cite[Theorem~7.2.2]{ratcliffe:hyp}):
\begin{lemma}
$G$ is realized if and only if
\begin{enumerate}
\item $G$ has one negative and three positive eigenvalues (which is equivalent to the characteristic polynomial $p_G(x)=\det(x I - G)=x^4+4x^3+a_2x^2+a_1x+a_0$ having coefficients $a_2 < 0$, $a_1>0$, $a_0<0$ by the Budan-Fourier theorem \cite{algRealGeom}),
\item $c_{ii}<0$ for all $i$, and\label{item:condNegAdj}
\item $c^2_{ij} < c_{ii} c_{jj}$ for all $i$ and $j$,\label{item:wellDefinedCos}
\end{enumerate}
where $c_{ij}$ denotes the respective cofactor of $G$. \label{lemma:singleGeomSimp}
\end{lemma}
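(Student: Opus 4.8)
The plan is to prove both directions by relating the algebraic conditions on $G$ to the geometry of the space of finite simplices, via a continuity/connectedness argument rather than an explicit construction. For the ``only if'' direction, suppose $G$ is the vertex Gram matrix of a finite non-flat simplex $\Delta$ with vertices $w_0,\dots,w_3 \in \H^3 \subset \R^{3,1}$. Normalizing the $w_i$ to have norm $-1$, the matrix of inner products $\langle w_i, w_j\rangle$ is exactly $G$, and since the $w_i$ span $\R^{3,1}$ (the simplex is non-flat) this Gram matrix of four independent vectors in a space of signature $(3,1)$ must itself have signature $(3,1)$, giving condition (1); the equivalence with the sign pattern of $p_G$ follows from Budan--Fourier applied to $p_G(x)$ and $p_G(-x)$ (note the trace of $G$ is $-4$, forcing the $x^3$-coefficient to be $+4$). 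For conditions (2) and (3): the cofactor matrix of $G$ is $\det(G)\,G^{-1}$, and $G^{-1}$ is (up to the nonzero scalar $\det(G)$) the Gram matrix of the dual basis, i.e.\ of the outward face normals of $\Delta$. Since $\det(G)<0$ and the face normals are spacelike with the planes they bound genuinely intersecting in $\H^3$, the diagonal entries $c_{ii}$ have a fixed sign (negative, by tracking the sign of $\det(G)$) and the off-diagonal entries satisfy $c_{ij}^2 < c_{ii}c_{jj}$ precisely because the dihedral angle $\theta_{ij}$ in \eqref{eqn:dihedral} is a genuine angle in $(0,\pi)$, i.e.\ the argument of $\arccos$ lies strictly in $(-1,1)$. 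Similarly, $c_{ii}<0$ is equivalent to the $2\times 2$ face Gram data being that of an honest hyperbolic triangle.

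For the ``if'' direction, assume $G$ satisfies (1)--(3). By (1), $G$ has signature $(3,1)$, so there exist vectors $w_0,\dots,w_3 \in \R^{3,1}$, unique up to $O(3,1)$, with $\langle w_i,w_j\rangle = G_{ij}$; since $G_{ii}=-1$ each $w_i$ is timelike of norm $-1$, hence (after possibly applying an element of $O(3,1)$ reversing time orientation on the relevant sheet) lies in $\H^3$. It remains to check that the geodesic simplex spanned by $w_0,\dots,w_3$ is non-degenerate, i.e.\ that it is a genuine finite simplex and not something with the vertices in ``bad'' position. The linear independence of the $w_i$ (forced by $\det G \neq 0$, which follows from $a_0 = \det G < 0$) guarantees the four points are in general position, so they span a nondegenerate simplex in $\H^3$; conditions (2) and (3) then guarantee that the faces, edges, and dihedral angles all behave as they should — in particular (3) is exactly what is needed for the $\arccos$ expressions for $\theta_{ij}$ in \eqref{eqn:dihedral} to be defined, so every dihedral angle is a well-defined number in $(0,\pi)$, and (2) plays the analogous role for the vertex angles of the triangular faces. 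I would assemble these observations to conclude $G$ is realized.

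The main obstacle I anticipate is the ``if'' direction's degeneracy check: knowing that $w_0,\dots,w_3$ are linearly independent timelike unit vectors in $\R^{3,1}$ does not immediately say they all lie on the \emph{same} sheet of the hyperboloid, nor that the convex hull is a compact simplex in $\H^3$ rather than, say, a configuration whose geodesic hull leaves $\H^3$. The clean way to handle this is a connectedness argument: the set of $G$ satisfying (1)--(3) is an open semialgebraic subset of the space of symmetric $4\times 4$ matrices with $-1$ diagonal; the subset actually realized by finite non-flat simplices is open (small perturbations of a nondegenerate simplex stay nondegenerate) and, by the ``only if'' direction, closed within the locus (1)--(3); so it suffices to show the locus (1)--(3) is connected and contains one realized point — e.g.\ the regular ideal-ish simplex shrunk to a small regular finite simplex, whose Gram matrix is explicitly $-\cosh(\ell)$ off-diagonal for small $\ell>0$ and is easily checked to satisfy (1)--(3). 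Connectedness of the locus can be seen by noting conditions (1)--(3) are each preserved under natural scaling deformations of the off-diagonal entries toward the ``all faces right-angled / all edges short'' regime. This reduces the whole lemma to: (a) the eigenvalue/Budan--Fourier bookkeeping, (b) the cofactor-dual-basis computation, and (c) one explicit example plus a connectedness remark — and citing \cite[Theorem~1.5]{heardThesis} for the parts that are genuinely identical to Heard's more general statement.
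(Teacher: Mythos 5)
The paper itself gives no proof of this lemma: it is quoted as a special case of \cite[Theorem~1.5]{heardThesis} (compare \cite[Theorem~7.2.2]{ratcliffe:hyp}), so what I am judging is your argument on its own. Your ``only if'' direction is the standard Minkowski-space argument and is fine. The genuine gap is in the ``if'' direction, precisely at the point you flagged and then tried to patch with an open--closed--connected argument: that patch cannot work, because the locus cut out by conditions (1)--(3) is \emph{not} connected and in fact contains matrices that are not realized. Concretely, let $G$ be the vertex Gram matrix of any finite non-flat simplex and set $G'=DGD$ with $D=\mathrm{diag}(1,1,1,-1)$. Since $D$ is orthogonal, $G'$ has the same eigenvalues as $G$, so (1) holds; since $\mathrm{adj}(G')=D\,\mathrm{adj}(G)\,D$, the diagonal cofactors are unchanged and the off-diagonal ones only change sign, so (2) and (3) hold as well. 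But $G'$ has the entries $+\cosh(l_{i3})>1$ in the last row and column, which are not of the form $-\cosh(l)$, so $G'$ is not a vertex Gram matrix. Thus conditions (1)--(3) alone do not force realizability; the signs of the off-diagonal entries matter, and your proposed proof never uses them. This also kills the connectedness claim (``scaling toward the all-edges-short regime''): condition (1) forces $|v_{ij}|>1$ for $i\neq j$ (reverse Cauchy--Schwarz for unit timelike vectors, plus $\det G=a_0\neq 0$), so the off-diagonal sign pattern is locally constant on the locus and $G$, $G'$ lie in different components. A smaller but real issue in the same passage: ``closed in the locus by the only-if direction'' is a non sequitur --- the only-if direction gives containment, not closedness, and a single element of $O(3,1)$ cannot move vectors lying on different sheets onto one sheet.

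The repair is cheap and makes the connectedness detour unnecessary: use the hypothesis, implicit in the paper and explicit in its application (Theorem~\ref{thm:hypStruct} assigns $\nu_e<-1$), that the off-diagonal entries are $<-1$. Realize $G$, which has signature $(3,1)$ by (1), as the Gram matrix of unit timelike vectors $w_0,\dots,w_3\in\R^{3,1}$; then $\langle w_i,w_j\rangle<-1<0$ forces $w_i$ and $w_j$ onto the same sheet of the hyperboloid (opposite sheets give positive inner product), so after a time reflection all four lie in $\H^3$, and linear independence (from $a_0=\det G<0$) means they are not contained in a totally geodesic plane; their geodesic hull is therefore a finite non-flat simplex with vertex Gram matrix $G$. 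Note that with this negativity assumption, conditions (2) and (3) actually follow from (1) by your own ``only if'' computation with the dual basis, which is consistent with the lemma being a specialization of Heard's more general statement about generalized simplices; in any case, stating and using that assumption is exactly what your write-up is missing.
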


Let $\trig$\negSpace{} be an oriented, finite 3-dimensional triangulation (i.e., all vertex links are $2$-spheres) and let $E(\trig)$ denote the set of edges of $\trig$\negSpace{}. 
Assume we have an assignment of a length $l_e>0$, or equivalently, a parameter $\nu_e < -1$ to each $e\in E(\trig)$ where the two are related by the formula $\nu_e=-\cosh(l_e)$. This induces a symmetric $4\times 4$-matrix $G_\Delta$ for each simplex $\Delta$ of $\trig$\negSpace{} where $v_{ii}=-1$ and $v_{ij}=\nu_e$ if $i\not=j$ and the edge of $\Delta$ from vertex $i$ to $j$ is incident to $e$. Let $\Theta_e$ denote the sum of all dihedral angles $\theta_{ij}$ incident to the edge $e$ of $\trig$\negSpace{}.
 We can now use \cite[Lemma~2.4]{heardThesis} to check whether this assignment yields a hyperbolic structure on $\trig$:
\begin{theorem}
An assignment of a parameter $\nu_e<-1$ to each edge $e$ of an oriented, finite triangulation $\trig$\negSpace{} induces a hyperbolic structure on $\trig$\negSpace{} if
\begin{enumerate}
\item \label{main:condA} each matrix $G_\Delta$ is realized (i.e., fulfills the conditions of Lemma~\ref{lemma:singleGeomSimp}) and
\item \label{main:condB} $\Theta_e=2\pi$ for every $e\in E(\trig)$.
\end{enumerate} \label{thm:hypStruct}
\end{theorem}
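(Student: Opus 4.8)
\medskip
\noindent\textit{Proof strategy.} The plan is to verify that the two hypotheses are precisely the data required by Heard's gluing criterion \cite[Lemma~2.4]{heardThesis}, specialized to the case where all vertices of $\trig\negSpace{}$ are finite. First I would use hypothesis~\ref{main:condA}: by Lemma~\ref{lemma:singleGeomSimp}, for each simplex $\Delta$ of $\trig\negSpace{}$ the fact that $G_\Delta$ is realized yields an honest compact, non-flat geodesic simplex $\widetilde\Delta\subset\H^3$, unique up to isometry, with vertex Gram matrix $G_\Delta$. In particular the edge of $\widetilde\Delta$ between the vertices labelled $i$ and $j$ has length $\mathrm{arccosh}(-v_{ij})=l_e$ whenever that edge of $\Delta$ is incident to $e\in E(\trig\negSpace{})$, and the dihedral and face angles of $\widetilde\Delta$ are the quantities $\theta_{ij}$ and $\eta_{i,jk}$ of~(\ref{eqn:dihedral}).

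Next I would assemble these into a glued space. Because the parameter $\nu_e$ is attached to the edge $e$ of $\trig\negSpace{}$ rather than to a simplex, whenever two faces of (possibly equal) simplices $\Delta,\Delta'$ are identified in $\trig\negSpace{}$ the corresponding triangular faces of $\widetilde\Delta$ and $\widetilde{\Delta'}$ have equal side lengths, hence are isometric by a unique isometry matching the vertex labels; since $\trig\negSpace{}$ is oriented this isometry can be chosen to reverse the induced boundary orientations, so the face-pairings are globally consistent. Gluing the $\widetilde\Delta$ along these isometries produces a closed, oriented $3$-manifold $M$, homeomorphic to the manifold underlying $\trig\negSpace{}$, carrying a path metric that is locally isometric to $\H^3$ away from the image of the $1$-skeleton.

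It then remains to rule out cone singularities along the $1$-skeleton. Along the interior of an edge $e$ the holonomy around $e$ is rotation by the sum of the dihedral angles at $e$ of the incident simplices, namely $\Theta_e$, so the metric extends smoothly across $e$ if and only if $\Theta_e=2\pi$, which is hypothesis~\ref{main:condB}. At a finite vertex $v$ the link $\mathrm{lk}(v)$ is a $2$-sphere triangulated by the spherical triangles $\mathrm{lk}(v,\widetilde\Delta)$; the side lengths of these triangles are face angles (so they agree along shared faces, making $\mathrm{lk}(v)$ locally spherical away from its vertices), and the total angle of $\mathrm{lk}(v)$ around the vertex corresponding to an edge $e$ through $v$ equals $\sum_{\Delta\supset e}\theta=\Theta_e=2\pi$. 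Hence $\mathrm{lk}(v)$ is a spherical surface with no cone points, i.e.\ a round $S^2$, and a neighborhood of $v$ in $M$ is an ordinary metric ball in $\H^3$. Therefore $M$ is a closed hyperbolic $3$-manifold built from $\trig\negSpace{}$ by endowing each simplex with a hyperbolic structure and gluing by isometries, which is the desired hyperbolic structure on $\trig\negSpace{}$.

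The only point requiring genuine care is the last one: confirming that the cone angles of the spherical surface $\mathrm{lk}(v)$ are exactly the $\Theta_e$, so that the edge condition~\ref{main:condB} already precludes singularities at the finite vertices and no separate vertex hypothesis is needed. This is immediate here because every simplex incident to an edge $e$ is also incident to both endpoints of $e$. The remaining ingredients (well-definedness of the developing map and holonomy, and consistency of the face-pairings with a global orientation) are the standard theory of gluings of geodesic polytopes, and the bulk of the work is in fact already contained in \cite[Lemma~2.4]{heardThesis}, so the proof is essentially a translation of our hypotheses into the hypotheses of that lemma.
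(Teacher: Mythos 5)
Your proposal is correct and follows essentially the same route as the paper, which states this theorem as a direct specialization of \cite[Lemma~2.4]{heardThesis} and offers no further argument of its own. Your added verification that the edge condition $\Theta_e=2\pi$ already forces each vertex link to be a smooth spherical $2$-sphere (so that no separate vertex hypothesis is needed) is sound and simply spells out the gluing argument that the cited lemma encapsulates.
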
 

\section{Cocycles} \label{sec:cocycles}

We will describe how to compute a representation\footnote{For consistency on how arrows in a category compose, the loop traversing the loop $a$ first and the loop $b$ second is denoted by $ba$ in $\pi_1(\trig)$.} $\pi_1(\trig)\to\myPGL{2}{\C}$ from an assignment of parameters $\nu_e$ as in Theorem~\ref{thm:hypStruct} using cocycles inspired by \cite[Section~9]{higherGluingEqns} as follows:
\begin{definition}
Let $\groupG$ be a group and $X$ be a space with a polyhedral decomposition.
A $\groupG$-cocycle on $X$ is an assignment of elements in $\groupG$ to the oriented edges of $X$ such that the product around each face is the identity and such that reversing the orientation of an edge replaces the labeling by its inverse.
\end{definition}

All cocycles in this section are $\myPGL{2}{\C}$-cocycles.

Given a matrix $G$ fulfilling the conditions in Lemma~\ref{lemma:singleGeomSimp}, we will construct a cocycle on the doubly truncated simplex $\overline{\Delta}$ coming from a simplex $\Delta$, see Figure~\ref{fig:DoublyTruncated}. We index a vertex $v$ of $\overline{\Delta}$ by the permutation $\sigma\in \permS_4$ such that the vertex of $\Delta$ closest to $v$ is $\sigma(0)$, the edge of $\Delta$ closest to $v$ is $\sigma(0)\sigma(1)$, and the face of $\Delta$ closest to $v$ is $\sigma(0)\sigma(1)\sigma(2)$. We label an oriented long, middle, or short edge of $\overline{\Delta}$ by $\alpha^{\sigma(0)\sigma(1)\sigma(2)}, \beta^{\sigma(0)\sigma(1)\sigma(2)},$ respectively $\gamma^{\sigma(0)\sigma(1)\sigma(2)}$ if it starts at the vertex indexed by $\sigma$. Let us introduce the notion of standard position to define the $\myPGL{2}{\C}$-matrices assigned to these edges.

\begin{figure}
\begin{center}
\scalebox{0.9}{
\input{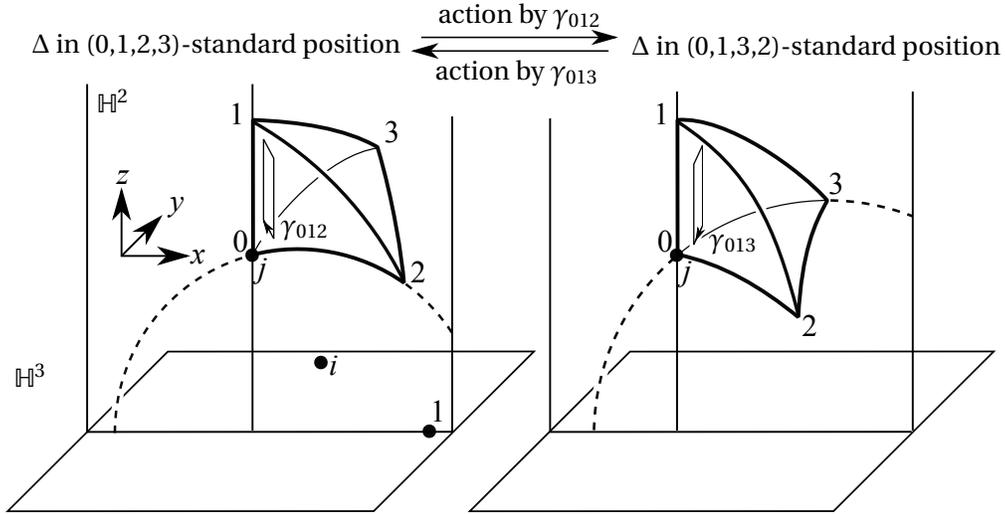}}
\end{center}
\caption{A simplex in different standard positions.} \label{fig:standardPos}
\end{figure}

We think of the upper half space model of hyperbolic 3-space  as a subset $\H^3=\{w = z+t\myJ : t>0\}$ of the quaternions. Recall that $\mathrm{Isom}^+(\H^3)\cong\myPGL{2}{\C} \cong\myPSL{2}{\C}$ where the action of a $\mySL{2}{\C}$-matrix on $\H^3$ is given by
$$\left(\begin{array}{cc}a &b\\ c&d\end{array}\right)\mapsto \left( w \mapsto (aw+b)\cdot(cw+d)^{-1} \right).$$

\begin{definition} \label{def:standardPosSimplex}
We say that a positively oriented, finite simplex $\Delta$ is in $\sigma$-standard position where $\sigma\in \permS_4$ if vertex
\begin{itemize}
\item $\sigma(0)$ is at $\myJ$,
\item $\sigma(1)$ at $t\myJ$ with $t>1$,
\item $\sigma(2)$ at $a+t\myJ$ with $a>0$ and
\item $\sigma(3)$ at $z+t\myJ$ with $\myIm(z)>0$ if $\sigma$ is even and $\myIm(z)<0$ otherwise.
\end{itemize}
\end{definition}
An example of this is shown in Figure~\ref{fig:standardPos}.
Geometrically, the motivation for this definition is that two faces of two (not necessarily distinct) simplices line up in $\H^3$ if the respective edge lengths match and the two simplices are in the respective standard positions. More precisely, let $f_1\in\{0,1,2,3\}$ be a face of the simplex $\Delta_1$ and $f_2\in\{0,1,2,3\}$ of $\Delta_2$. Let $\sigma\in \permS_4\setminus \permA_4$ with $\sigma(f_1)=f_2$ be a pairing of the two faces. If the edge lengths match under this pairing, then the faces match if each $\Delta_k$ is in $\sigma_k$-standard position where $\sigma_1$ is any permutation with $\sigma_1(3)=f_1$ and $\sigma_2=\sigma\circ \sigma_1$.

This definition also gives us a cocycle as follows (also see Figure~\ref{fig:standardPos}):

\begin{definition}
Consider the isometry class of a finite simplex $\Delta$. Given an oriented edge $e$ of $\overline{\Delta}$, let $\sigma$ and $\sigma'$ be the permutations that index the vertex where $e$ starts and, respectively, ends. The natural cocycle on $\overline{\Delta}$ is the cocycle assigning to each edge $e$ the $\myPGL{2}{\C}$-matrix taking $\Delta$ from $\sigma$-standard position to $\sigma'$-standard position.
\end{definition}

Note that we can identify Euclidean 3-vector space isometrically with the tangent space of a point in $\H^3$ such that the tangents corresponding to the $x$-, $y$-, and $z$-axis are parallel to the real line, the imaginary line, respectively, the line $t\myJ$ (see Figure~\ref{fig:standardPos}). Thus, we can associate a $\mySO{3}$-matrix to an element in $\Isom^+(\H^3)$ fixing a (finite) point of $\H^3$. Let
$$R_\omega = \left(\begin{array}{ccc} \cos\omega & -\sin\omega & 0 \\ \sin\omega & \phantom{-}\cos\omega &  0\\0 &0 & 1\end{array}\right)$$
be the rotation about the $z$-axis by the angle $\omega$.

\begin{lemma}
The natural cocycle on $\overline{\Delta}$ can be computed from the vertex Gram matrix $G$ as follows (apply even permutations $\sigma\in \permA_4$ to obtain labels for all edges):
$$\alpha^{120}=\alpha^{210}=\alpha^{123}=\alpha^{213}=\left(\begin{array}{cc}0  & \sqrt{v_{12}^2-1} - v_{12}\\1 & 0 \end{array}\right),$$
$$\beta^{123}=\beta^{132}=\left(\begin{array}{cc}-\cos(\eta_{1,32}/2) & \sin(\eta_{1,32}/2)\\ \phantom{-}\sin(\eta_{1,32}/2) & \cos(\eta_{1,32}/2)\end{array}\right),$$
$$\gamma^{123}=\left(\gamma^{120}\right)^{-1}=\gamma^{210}=(\gamma^{213})^{-1}=\left(\begin{array}{cc}\myExp{\myI \theta_{03}} & 0 \\ 0 & 1\end{array}\right).$$
Note that $\beta^{123}$ and $\gamma^{123}$ fix the point $\myJ\in\H^3$ and the associated $\mySO{3}$-matrices are:
$$\left(\begin{array}{ccc}
-\cos\eta_{1,32} & \phantom{-}0 & \sin \eta_{1,32} \\
 0 & -1 & 0 \\
  \phantom{-}\sin\eta_{1,32} & \phantom{-}0 & \cos\eta_{1,32} \end{array}\right)\quad\mbox{and}\quad R_{\theta_{03}}.
$$
\label{lemma:coycleAssignment}
\end{lemma}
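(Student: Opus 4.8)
The plan is to compute the natural cocycle by explicitly tracking where a simplex $\Delta$ in $\sigma$-standard position goes under the isometries that realign its faces, and reading off the $\myPGL{2}{\C}$-matrices. First I would set up coordinates: place $\Delta$ in $(0,1,2,3)$-standard position, so vertex $0$ is at $\myJ$, vertex $1$ at $t_1\myJ$, vertex $2$ at $a + t_2\myJ$ on the geodesic plane over the real axis, and vertex $3$ somewhere with $\myIm(z) > 0$. The three types of edges of $\overline{\Delta}$ (long $\alpha$, middle $\beta$, short $\gamma$) correspond to the three ``elementary moves'' on permutations: the long edge from $\sigma$ to $\sigma\cdot(01)$ swaps which vertex is at $\myJ$; the middle edge from $\sigma$ to $\sigma\cdot(12)$ fixes vertex $\sigma(0)$ at $\myJ$ but changes the face; the short edge from $\sigma$ to $\sigma\cdot(23)$ fixes both $\sigma(0)$ at $\myJ$ and the edge $\sigma(0)\sigma(1)$, rotating about it.

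For $\gamma^{123}$: going from $(1,2,3,0)$-standard to $(2,1,3,0)$-standard position fixes vertices $1$ and $2$ (hence the geodesic through $\myJ$ and $t\myJ$, i.e.\ the $z$-axis in the tangent picture) and rotates the rest. The amount of rotation is precisely the dihedral angle of $\Delta$ along the edge $12$ as seen from the appropriate side — which by \eqref{eqn:dihedral} and the labeling convention is $\theta_{03}$ (the dihedral angle between the faces opposite vertices $0$ and $3$, these being the two faces of $\Delta$ meeting along edge $12$). A rotation by $\theta_{03}$ about the $z$-axis fixing $\myJ$ is represented in $\myPGL{2}{\C}$ by $\mathrm{diag}(\myExp{\myI\theta_{03}}, 1)$ (since $\mathrm{diag}(\lambda,1)$ acts on $\partial\H^3 = \C$ by $z\mapsto \lambda z$, a Euclidean rotation when $|\lambda|=1$), and the associated $\mySO{3}$-matrix is exactly $R_{\theta_{03}}$. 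For $\beta^{123}$: going from $(1,2,3,0)$-standard to $(1,3,2,0)$-standard position fixes vertex $1$ at $\myJ$ but swaps the roles of vertices $2$ and $3$; this is the isometry fixing $\myJ$ that sends the direction towards vertex $2$ to the direction towards vertex $3$ within the plane they span, i.e.\ a rotation by the angle $\eta_{1,32}$ (the angle at vertex $1$ in triangle $1,3,2$) composed with the reflection forced by the orientation-parity change in Definition~\ref{def:standardPosSimplex}. Working this out in the tangent space gives the stated $\mySO{3}$-matrix with eigenvalue $-1$ in the $y$-direction, and lifting to $\mySL{2}{\C}$ and then $\myPGL{2}{\C}$ gives the displayed $2\times 2$ matrix with half-angle entries $\cos(\eta_{1,32}/2), \sin(\eta_{1,32}/2)$. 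For $\alpha^{120}$: going from $(1,2,0,\cdot)$-standard to $(2,1,0,\cdot)$-standard position is the isometry swapping vertex $1$ (at $\myJ$) and vertex $2$ (at $t\myJ$) while fixing the common edge, i.e.\ the elliptic-or-hyperbolic element of the $\myPGL{2}{\C}$-subgroup preserving the $z$-axis that interchanges the two endpoints $\myJ$ and $t\myJ$; since the hyperbolic distance between them is $l_{12}$ with $-\cosh l_{12} = v_{12}$, a short computation identifies this element as $\begin{pmatrix} 0 & \mu \\ 1 & 0\end{pmatrix}$ with $\mu = \sqrt{v_{12}^2 - 1} - v_{12} = e^{l_{12}}$, matching the stated formula. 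The equalities among the various superscript labels (e.g.\ $\alpha^{120}=\alpha^{123}$, $\gamma^{123}=(\gamma^{120})^{-1}$) then follow from the symmetry of the construction — they record that these edges of $\overline{\Delta}$ are identified or oppositely oriented under the relevant permutations, which is a combinatorial check on the permutahedron.

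The main obstacle I expect is not any single computation but rather pinning down all the sign and orientation conventions consistently: which side of each edge the dihedral angle is measured from, how the parity of $\sigma$ in Definition~\ref{def:standardPosSimplex} (the $\myIm(z) > 0$ vs.\ $< 0$ clause) interacts with the $\myPGL{2}{\C}$ versus $\myPSL{2}{\C}$ lift, and ensuring that the three matrices fit together so that the product around each $2$-face of $\overline{\Delta}$ is genuinely the identity (i.e.\ that what we have written down really is a cocycle). Concretely, I would verify the cocycle condition on one representative square face and one representative hexagonal face of the permutahedron $\overline{\Delta}$ — the square faces express that $(01)$ and $(23)$ commute, giving an identity of the form $\alpha\gamma\alpha^{-1}\gamma^{-1} = 1$ type relation among conjugates, and the hexagonal faces express the braid-type relation among $(01), (12)$ — and the remaining faces follow by applying even permutations. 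The passage from $\mySL{2}{\C}$ to $\mySO{3}$ for the elements fixing $\myJ$ is routine via the standard double cover, using that the tangent frame at $\myJ$ is aligned with the real, imaginary, and $\myJ$-directions as stated before the lemma; checking that $\mathrm{diag}(e^{\myI\theta},1)$ and the $\beta$-matrix map to the claimed $\mySO{3}$-matrices is then a direct substitution.
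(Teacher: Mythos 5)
Your proposal is essentially the paper's own argument: each edge matrix is identified as the explicit isometry carrying $\Delta$ from one standard position to the other and then computed ($\alpha^{120}$ as the involution $z\mapsto x/z$ exchanging $\myJ$ and $\myExp{l_{12}}\myJ$, $\beta^{123}$ as the rotation of the vertical plane about $\myJ$ by $\eta_{1,32}$ composed with the involution fixing the axis $t\myJ$ pointwise, the Cayley transform supplying the half-angle $2\times 2$ matrix, and $\gamma$ as the rotation about the axis through $\myJ$ and $t\myJ$ by the dihedral angle). The only slip is notational: the terminal vertex of $\gamma^{123}$ is indexed by $(1,2,0,3)$, not $(2,1,3,0)$ (that is the $\alpha$-endpoint), though your geometric description of $\gamma^{123}$ as the rotation about the edge $12$ by $\theta_{03}$ is exactly right.
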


\begin{proof}
$\alpha^{120}$ is an involution exchanging the points $\myJ$ and $\myExp{l_{12}}\myJ$. The associated $\C P^1$-automorphism is of the form $z\mapsto x/z$ and exchanges $1$ and $\myExp{l_{12}}$. An elementary calculation gives the value for $x$.\\
Consider the hyperbolic plane $\H^2=\{x+t\myJ : x\in \R, t>0\}\subset \H^3$. $\beta^{123}$ is the composition of the rotation of $\H^2$ about $\myJ$ by $\eta_{1,32}$ with the involution fixing the line $t\myJ$ pointwise. We obtain the $\myPSL{2}{\R}$-matrix for the rotation of $\H^2$ by conjugating the rotation of the unit disk by $\eta_{1,32}$ with the matrix taking the upper half plane model to the Poincare disk model of hyperbolic 2-space:
$$\left(\begin{array}{cc}\myI & 1\\ 1 & \myI\end{array}\right)^{-1} \cdot \left(\begin{array}{cc} \myExp{\myI \eta_{1,32}/2} & 0 \\ 0 & \myExp{-\myI \eta_{1,32}/2}\end{array}\right)\cdot \left(\begin{array}{cc}\myI & 1\\ 1 & \myI\end{array}\right)$$
\end{proof}

Given an oriented triangulation $\trig$\negSpace{}, let $\overline{\trig}$ be the complex obtained by replacing each simplex $\Delta$ by the double truncated simplex $\overline{\Delta}$. The long and short edges of $\overline{\trig}$ about an edge $e$ of $\trig$\negSpace{} form a prism, see Figure~\ref{fig:Prism}. Given an assignment of edge parameters $\nu_e$ for $\trig$\negSpace{}, this prism about an edge $e\in E(\trig)$ is a cocycle if and only if the short edges compose to the identity, which is equivalent to $\Theta_e$ being a multiple of $2\pi$. Let $\hat\trig$\negSpace{} denote the complex $\overline{\trig}\cup\mbox{Prisms}$.

\begin{theorem} \label{thm:cocycleExtendingToTrig}
Consider an assignment of a parameter $\nu_e<-1$ to each edge of an oriented, finite triangulation $\trig$\negSpace{}. 
\begin{enumerate}
\item \label{cocycCondA} If each matrix $G_\Delta$ is realized, we obtain a natural cocycle on $\overline{\trig}$ and, thus, a representation $\pi_1(\overline{\trig})\to\myPGL{2}{\C}$ (up to conjugation, unless we pick a vertex of $\overline{\trig}$ as basepoint).
\item \label{cocycCondB} If, furthermore, $\Theta_e$ is a multiple of $2 \pi$ for every $e\in E(\trig)$, the cocycle extends to $\hat{\trig}$\negSpace{} and, thus, yields a representation of $\pi_1(\trig)\to\myPGL{2}{\C}$.
\item \label{cocycCondC} If, furthermore, $\Theta_e = 2\pi$ for every $e\in E(\trig)$, the representation is giving a hyperbolic structure on $\trig$\negSpace{}. \label{thm:cocycle}
\end{enumerate}
 \end{theorem}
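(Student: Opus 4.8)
\medskip
\noindent\emph{Proof plan.} The plan is to prove the three parts in turn: part~\ref{cocycCondA} by checking that the per-simplex natural cocycles of Lemma~\ref{lemma:coycleAssignment} are defined and glue to a cocycle on $\overline{\trig}$ (from which the representation follows by the standard edge-transport construction); part~\ref{cocycCondB} by identifying the only new face relations contributed by the prisms and computing exactly when they hold; and part~\ref{cocycCondC} by invoking Theorem~\ref{thm:hypStruct} and recognizing the part~\ref{cocycCondB} representation as the holonomy of the resulting hyperbolic structure. I expect the combinatorial computation in part~\ref{cocycCondB} to be the main obstacle.

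\medskip
For part~\ref{cocycCondA}: since $\nu_e<-1$ forces $v_{ij}^2-1>0$, and conditions~\ref{item:condNegAdj} and~\ref{item:wellDefinedCos} of Lemma~\ref{lemma:singleGeomSimp} put the arguments of the $\arccos$'s in~\eqref{eqn:dihedral} into $(-1,1)$ while condition~(1) makes $\Delta$ a genuine non-flat simplex in $\H^3$, all the matrices of Lemma~\ref{lemma:coycleAssignment} are defined, so each $\overline{\Delta}$ carries its natural cocycle. It \emph{is} a cocycle for free: the product of the labels around any face of $\overline{\Delta}$ is an element of $\Isom^+(\H^3)$ carrying $\Delta$ from a standard position back to the same one, hence fixing all four non-coplanar vertices of $\Delta$, hence the identity. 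To pass to $\overline{\trig}$, observe that every $2$-cell of $\overline{\trig}$ is a face of some permutahedron, so its face relation is inherited, and every edge of $\overline{\trig}$ carries a label determined by the parameter or the angles of a single edge or triangular face of $\trig$ --- about which any two permutahedra meeting there agree, since the face pairings of $\trig$ match edge lengths --- so the edge labels are globally consistent. Hence $\overline{\trig}$ carries a $\myPGL{2}{\C}$-cocycle, and transporting labels along loops in its $1$-skeleton defines a homomorphism $\pi_1(\overline{\trig})\to\myPGL{2}{\C}$ (the face relations making it well defined on homotopy classes), canonical up to conjugation unless a basepoint vertex is fixed.

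\medskip
For part~\ref{cocycCondB}: adjoining the prism about an edge $e$ adds no new vertices or edges, only the two $n$-gon faces bounded by the $n$ short ($\gamma$-)edges running around one end of $e$, where $n$ counts the corners of simplices at $e$ and $\Theta_e$ is the sum of the corresponding $n$ dihedral angles. By Lemma~\ref{lemma:coycleAssignment}, the $\gamma$-edge contributed by a simplex $\Delta$ is labeled $\mathrm{diag}(\myExp{\myI\vartheta},1)$ with $\vartheta$ the dihedral angle of $\Delta$ at (the edge of $\Delta$ mapping to) $e$. Because $\trig$ is oriented, these $\gamma$-edges are traversed coherently around the $n$-gon, so the product of their labels is $\mathrm{diag}(\myExp{\myI\Theta_e},1)$, which is the identity of $\myPGL{2}{\C}$ if and only if $\Theta_e\in 2\pi\mathbb{Z}$; the two $n$-gon relations give the same condition. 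So if every $\Theta_e$ is a multiple of $2\pi$, the cocycle extends over all prisms, i.e.\ to $\hat{\trig}$. Finally, because the vertex links of $\trig$ are $2$-spheres, $\hat{\trig}$ is homotopy equivalent to $\trig$ with a small open ball removed around each vertex, and removing finitely many open balls from a closed $3$-manifold induces an isomorphism on $\pi_1$; hence $\pi_1(\hat{\trig})\cong\pi_1(\trig)$ and we obtain $\pi_1(\trig)\to\myPGL{2}{\C}$. This paragraph is the crux: one must confirm that the prism's new $2$-cells are precisely these two $n$-gons, that their bounding edges are the short edges around $e$ with consistent orientation (which is where orientability of $\trig$ enters), and that the dihedral angles in question sum to $\Theta_e$ with no sign or multiplicity error. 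The geometric picture --- every one of these isometries is a rotation about the geodesic through $e$ --- is transparent, but matching it against the combinatorial edge-labeling of Section~\ref{sec:cocycles} is where the care is needed.

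\medskip
For part~\ref{cocycCondC}: if moreover $\Theta_e=2\pi$ for every $e$, then each $G_\Delta$ being realized together with the edge condition is exactly the hypothesis of Theorem~\ref{thm:hypStruct}, so developing the geodesic simplices gives a hyperbolic structure on $\trig$: on each simplex the developing map is an isometry onto a geodesic simplex, and $\Theta_e=2\pi$ (rather than a larger multiple of $2\pi$) says the simplices fit around $e$ exactly once, so the developing map is an unbranched local isometry. Its holonomy is computed by the natural cocycle: placing each developed simplex into a standard position, the matrix attached to an oriented edge of $\hat{\trig}$ is the corresponding change-of-standard-position isometry, so transport around a loop multiplies precisely the cocycle matrices. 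Hence the representation from part~\ref{cocycCondB} is the holonomy of this hyperbolic structure.
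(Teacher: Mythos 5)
Your proposal is correct and takes essentially the same approach as the paper, whose (terser) proof establishes (1) by noting that the $\alpha$- and $\beta$-labels are involutions depending only on the shared triangle's edge parameters, gets (2) from the remark preceding the theorem that a prism is a cocycle iff the short edges compose to the identity, i.e.\ $\Theta_e\in2\pi\mathbb{Z}$, together with $\pi_1(\hat\trig)\cong\pi_1(\trig)$ since the two differ only by $3$-balls, and reads (3) off Theorem~\ref{thm:hypStruct}. The one point you gloss over is the orientation of the hexagon edges shared by two glued permutahedra, which the paper settles via the involution property of $\alpha$ and $\beta$ (equivalently, the equalities listed in Lemma~\ref{lemma:coycleAssignment} that you cite).
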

 
 \begin{proof}
Note that $\alpha$ and $\beta$ in Lemma~\ref{lemma:coycleAssignment} are involutions and only involve the parameters $v_{ij}$ on the edges of the triangle containing the respective $\alpha$ and $\beta$. Hence, the matrices on two big hexagons on two doubly-truncated simplices are compatible and the hexagons can be identified if the edge parameters on the respective triangles of the corresponding tetrahedra match. This proves \eqref{cocycCondA}.\\
\eqref{cocycCondB} follows from the above comment about the prisms being cocycles and the fact that $\hat{\trig}$ differs from $\trig$\negSpace{} only by a set of 3-balls which do not change $\pi_1$.\\
\eqref{cocycCondC} is just restating Theorem~\ref{thm:hypStruct}.
 \end{proof}
 
 
 \section{Extending cocycles on genus 0 surfaces} \label{sec:vertexCocycles}

Let us define a punctured topological polyhedron which we will use as model for a ``vertex link'' of $\hat\trig$\negSpace{} when removing some prisms.
 
\begin{definition} \label{def:puncTopPolyhed}
 Let $L$ be a topological polyhedron, i.e., a decomposition of an oriented $2$-sphere into polygons. Let $P_1, \dots, P_p$ be selected two-cells of $L$ such that
 $\partial P_i \cap \partial P_j \not =\emptyset$ for all $i\not=j$. We call $(L,L\setminus \bigcup P_l)$ a punctured topological polyhedron.
\end{definition}

The complex $L\setminus \bigcup P_l$ will often be equipped with the following kind of cocycle.
 
\begin{definition}
Let $X$ be a surface with boundary and with a decomposition into polygons. A $(\mySO{3}, \mySO{2})$-cocycle on $X$ is a $\mySO{3}$-cocycle where edges in the boundary $\partial X$ are labeled by rotations $R_\omega\in\mathrm{Im}(\mySO{2}\hookrightarrow\mySO{3})$ about the $z$-axis.
\end{definition}
 
The goal of this section is to give a criterion when such a cocycle on $L\setminus \bigcup P_l$ extends to a $\mySO{3}$-cocycle on $L$.
 
 \begin{definition}
 Let $X$ be a $2$-complex. An edge-loop $\myPath$ is a word in the oriented edges of $X$ such that the end of one edge coincides with start of the next edge (when reading the word from right to left and cyclically).
 \end{definition}
 
Note that by parametrizing the oriented edges of $X$ and concatenating them in the order given by $\myPath$, we obtain a geometric realization of $\myPath$ that is a (based) topological loop in $X$. Also note that a $\groupG$-cocycle on $X$ assigns a value in $\groupG$ to an edge-loop $\myPath$ obtained by multiplying the labels of the oriented edges in the respective order. The value assigned to $\myPath$ depends only on the homotopy type of (the geometric realization of) $\myPath$ and, in particular, is trivial if the loop is contractible in $X$.

\begin{definition} \label{def:gimbalLoop}
Let $(L,L\setminus \bigcup P_l)$ be a punctured topological polyhedron. A gimbal loop $\myPath$ is a word in the oriented edges of $L$ and the selected two cells $P_1, \dots, P_p$ such that
\begin{itemize}
\item each $P_1, \dots, P_p$ is contained in $\myPath$ exactly once,
\item each $P_l$ in $\myPath$ is preceeded (when reading the word from right to left and cyclically) by an edge $e_j$ such that the endpoint of $e_j$ is a vertex of $\partial P_l$, and
\item dropping all $P_1, \dots, P_p$ from $\myPath$ results in an edge loop. This edge loop bounds a disk in $L\setminus\bigcup P_l$. The interior of the disk embeds into $L\setminus\bigcup P_l$ matching the orientation of $L$.

\end{itemize}
\end{definition}

\begin{example} \label{example:GimbalLoop}
Figure~\ref{fig:gimbalLoop} shows an example of a gimbal loop $\myPath$ for $p=3$. The corresponding word is $e_6 P_2 e_5 e_4 P_1 e_3 e_2 P_3 e_1.$
\end{example}

\begin{figure}[htb]
 \begin{minipage}{\textwidth}
  \begin{center}
   \begin{minipage}{0.49\textwidth}
    \begin{center}
     \scalebox{0.25}{
      \input{figures_gen/gimbalLoop.tex}
     }
    \end{center}
   \end{minipage}
   \begin{minipage}{0.49\textwidth}
    \begin{center}
     \scalebox{0.25}{
      \input{figures_gen/gimbalLoopExtended.tex}
     }
    \end{center}
   \end{minipage}%
\\ \vspace{0.4cm}
   \begin{minipage}[t]{0.49\textwidth}
    \caption{The edge loop obtained when dropping the $P_l$ from a gimbal loop $\myPath$.} \label{fig:gimbalLoop}
   \end{minipage}
   \begin{minipage}[t]{0.49\textwidth}
    \caption{The edge loop obtained when replacing $P_l$ by $\partial P_l$ in a gimbal loop $\myPath$.} \label{fig:splicedGimbalLoop}
   \end{minipage}
  \end{center}
 \end{minipage}
\end{figure}

\begin{definition} \label{def:defGimbalFunction}
Let $(L,L\setminus \bigcup P_l)$ be a punctured topological polyhedron and $\myPath$ be a gimbal loop. Fix a $(\mySO{3},\mySO{2})$-cocycle on $L\setminus \bigcup P_l$. Assume we are given numbers $(d_1,\dots,d_p)$. Recall that the cocycle assigns a $\mySO{3}$-matrix to every edge in $\myPath$. Assign to each $P_l$ in $\myPath$ the rotation $R_{d_l}\in\mathrm{Im}(\mySO{2}\hookrightarrow\mySO{3})$. We call the product of the matrices assigned to the letters in $\myPath$ the gimbal matrix $m_\myPath(d_1,\dots,d_p)$. Furthermore, we call the function
$$g_\myPath:\R^p\to\R^3,\quad (d_1,\dots, d_p)\mapsto\left(m_\myPath(d_1,\dots,d_p)_{01},m_\myPath(d_1,\dots,d_p)_{02},m_\myPath(d_1,\dots,d_p)_{12}\right)$$
assigning the upper triangular entries of the gimbal matrix the gimbal function.
\end{definition}

Recall that a $(\mySO{3},\mySO{2})$-cocycle on $L\setminus \bigcup P_l$ assigns a rotation $R_\omega\in\mathrm{Im}(\mySO{2}\hookrightarrow\mySO{3})$ to each edge in $\partial P_l$ (with orientation induced from the orientation of $P_l\subset L$) where we pick $\omega\in(-\pi,\pi]$. Let $\delta_l$ denote the sum of all the angles $\omega$ over the edges of $\partial P_l$. For example, $\delta_2$ in Figure~\ref{fig:gimbalLoop} is the sum of the angles $\omega$ associated to the edges $e^{-1}_{6}, e_{19}, e_{18}, e_{17}, e_{16},$ and $e_{15}.$

\begin{lemma}
Using the same setup as in Definition~\ref{def:defGimbalFunction}, we have
$g_\myPath(2\pi,\dots,2\pi)=g_\myPath(\delta_1,\dots,\delta_p)=(0,0,0)$. \label{lemma:gimbalMatrixId}
\end{lemma}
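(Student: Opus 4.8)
The plan is to deduce both identities from a single observation: for each of the two prescribed tuples $(d_1,\dots,d_p)$ the gimbal matrix $m_\myPath(d_1,\dots,d_p)$ is the $\mySO{3}$-element that the fixed cocycle assigns to a suitable edge-loop of $L\setminus\bigcup P_l$, and in both cases that edge-loop is contractible there; since a cocycle assigns the identity to a contractible edge-loop (as noted just before Definition~\ref{def:gimbalLoop}), the gimbal matrix is the $3\times3$ identity, whose upper-triangular entries $m_{01},m_{02},m_{12}$ are all $0$. For the tuple $(2\pi,\dots,2\pi)$ this is immediate: $R_{2\pi}=I$, so the letters $P_l$ contribute trivially and $m_\myPath(2\pi,\dots,2\pi)$ equals the cocycle value of the edge-loop $\gamma_0$ obtained from $\myPath$ by deleting every $P_l$; by the third clause of Definition~\ref{def:gimbalLoop} this $\gamma_0$ bounds a disk $D$ in $L\setminus\bigcup P_l$ and hence is contractible there.

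For the tuple $(\delta_1,\dots,\delta_p)$ I would first rewrite $R_{\delta_l}$ as a product of cocycle labels. The cocycle is a $(\mySO{3},\mySO{2})$-cocycle, so every edge of $\partial P_l$ carries a rotation about the $z$-axis; such rotations commute and $R_\omega R_{\omega'}=R_{\omega+\omega'}$, so the cocycle value of the edge-loop $\partial P_l$ --- based at the vertex of $\partial P_l$ at which the letter $P_l$ sits, and oriented as the boundary of $P_l\subset L$ --- is exactly $R_{\delta_l}$, where $\delta_l$ is the sum of the angles $\omega$ over the edges of $\partial P_l$ as in the statement. Consequently $m_\myPath(\delta_1,\dots,\delta_p)$ is the cocycle value of the edge-loop $\gamma_1$ obtained from $\myPath$ by splicing in, in place of each letter $P_l$, the boundary loop $\partial P_l$ --- the loop pictured in Figure~\ref{fig:splicedGimbalLoop}; this is a genuine edge-loop because deleting the $P_l$ from $\myPath$ already produces one. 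So it remains to show that $\gamma_1$ is contractible in $L\setminus\bigcup P_l$.

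For that I would use the disk $D$ from Definition~\ref{def:gimbalLoop}. Removing the embedded open disk $\mathrm{int}\,D$ from the sphere $L$ leaves a disk, and removing further the $p$ cells $P_l$ (which are disjoint from $\mathrm{int}\,D$) leaves a planar surface $R$ whose boundary is $\gamma_0$ together with one copy of each $\partial P_l$. Each marked vertex $v_l$ lies on $\gamma_0=\partial D$, so $\gamma_1$ is a single loop running once around $\gamma_0$ and, at each $v_l$, once around $\partial P_l$; and because $\mathrm{int}\,D$ is embedded \emph{matching the orientation of $L$}, the orientation of $\gamma_0$ as $\partial D$ and the orientation of each $\partial P_l$ as the boundary of $P_l$ keep $R$ on one and the same side of $\gamma_1$ throughout. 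Thus $\gamma_1$ traverses the oriented boundary of the planar surface $R$; such a boundary bounds $R$ and is therefore null-homotopic in $R\subset L\setminus\bigcup P_l$, so $m_\myPath(\delta_1,\dots,\delta_p)=I$.

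I expect the last step to be the only real obstacle. One has to check carefully that the combinatorial conditions in Definition~\ref{def:gimbalLoop} --- that dropping the $P_l$ yields an edge-loop bounding a disk with orientation-preservingly embedded interior, and that each $P_l$ is immediately preceded by an edge ending at a vertex of $\partial P_l$ --- are precisely what make $\gamma_1$ equal to the oriented boundary of $R$ with the correct orientation, rather than some word in the $[\partial P_l]$ that is only trivial in homology. In doing so one must also handle the mildly degenerate possibilities allowed by the setup: $\gamma_0$ immersed rather than embedded, $\gamma_0$ running along an edge of some $\partial P_l$, and the closed cells $\overline{P_l}$ meeting one another along their boundaries --- the last of which Definition~\ref{def:puncTopPolyhed} in fact requires. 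None of these should change the conclusion, but they do require care.
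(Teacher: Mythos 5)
Your argument is correct and is essentially the paper's: the $(2\pi,\dots,2\pi)$ case by discarding the identity letters, and the $(\delta_1,\dots,\delta_p)$ case by splicing $\partial P_l$ into the gimbal loop (the commuting $z$-rotations on the edges of $\partial P_l$ multiply to $R_{\delta_l}$, which is the reason behind the paper's ``by definition of $\delta_l$'') and then showing the spliced edge loop is contractible. The one place you deviate is that contractibility step: the paper homotopes the spliced loop, using the disk $D$ itself, to the attaching circle of the $2$-cell in a standard model of the genus-$0$ surface $L\setminus\bigcup P_l$, whereas you contract it inside the smaller planar surface $R=L\setminus(\mathrm{int}\,D\cup\bigcup P_l)$ by viewing it as the boundary traversal of $R$. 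That stronger claim is also true, but your phrase ``bounds $R$ and is therefore null-homotopic'' is, as stated, only a homological assertion; since $\pi_1(R)$ is free of rank $p$, one must check --- exactly as you flag in your last paragraph --- that the splicing vertices, the traversal order along the loop bounding $D$, and the orientation conventions of Definition~\ref{def:gimbalLoop} yield the boundary traversal with the correct cyclic order and conjugating arcs, since a wrong order or conjugator would give a nontrivial product of commutators that is only trivial in homology. Because you identify this as the remaining point to verify, and the paper's own proof is no more detailed at the corresponding step (it likewise appeals to the standard genus-$0$ boundary relation via Figure~\ref{fig:gimbalLoopSkeleton}), I consider the proposal correct and essentially equivalent to the paper's proof.
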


\begin{proof}
Assume each $d_l=2\pi$. Then each $R_{d_l}$ is the identity and thus can be dropped from the gimbal matrix $m_\myPath(2\pi,\dots,2\pi)$. Thus the gimbal matrix is equal to the matrix the cocycle assigns to the edge loop obtained when dropping the $P_i$. This edge loop is contractible. Thus the gimbal matrix is the identity and $g_\myPath(2\pi,\dots,2\pi)=(0,0,0)$.\\
The gimbal loop $\myPath$ can also be turned into an edge loop by replacing each $P_l$ in $\myPath$ by a word in the oriented edges in $\partial P_l$. Here we use the orientation of $\partial P_l$ induced from the $P_l\subset L$ and start traversing $\partial P_l$ at the endpoint of the preceding edge. Figure~\ref{fig:splicedGimbalLoop} shows this edge loop for Example~\ref{example:GimbalLoop}. The corresponding word is:
$$e_6(e_6^{-1}e_{19}e_{18}e_{17}e_{16}e_{15})e_5e_4(e_4^{-1}e_{14}e_{13}e_{12}e_{11})e_3e_2(e_2^{-1}e_{10}e_9e_8e_7)e_1.$$
Note that by definition of $\delta_l$, the matrix assigned to this edge loop by the cocycle is equal to the gimbal matrix $m_\myPath(\delta_1,\dots,\delta_p)$. This edge loop is again contractible and thus $g_\myPath(\delta_1,\dots,\delta_p)=(0,0,0)$. To see that the edge loop is contractible, note that a genus 0 surface with $p$ boundary components can be obtained by attaching a 2-cell to a 1-complex consisting of $p$ loops and $p$ arcs connecting the loops to common base point. The edge looped can be homotoped into this form since it bounds a disk with interior embedding into $L\setminus \cup P_l$, see Figure~\ref{fig:gimbalLoopSkeleton}.%
\begin{figure}[h]
\begin{minipage}{\textwidth}
\begin{center}
\begin{minipage}{0.49\textwidth}
\begin{center}
\scalebox{0.25}{
\input{figures_gen/gimbalLoopSkeleton.tex}
}
\end{center}
\end{minipage}
\begin{minipage}{0.49\textwidth}
\begin{center}
\scalebox{1.0}{
\includegraphics[height=7cm]{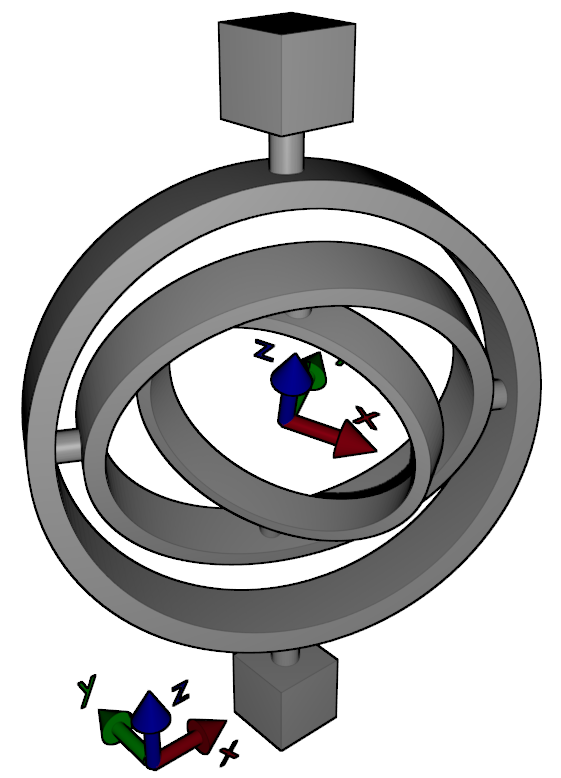}
}
\end{center}
\end{minipage}\\
\begin{minipage}{0.49\textwidth}
\caption{Homotoping the edge loop obtained from gimbal loop $\myPath$ such that it bounds a disk.} \label{fig:gimbalLoopSkeleton}
\end{minipage}
\begin{minipage}{0.49\textwidth}
\caption{A gimbal.} \label{fig:gimbal}
\end{minipage}
\end{center}
\end{minipage}
\end{figure}
\end{proof}

Let $Dg_\myPath:\R^p\to M(3\times p, \R)$ denote the Jacobian of $g_\myPath$ and $[Dg_\myPath(K)]$ be the interval closure of $Dg_\myPath(K)$, i.e., the smallest (axis-aligned) closed box containing $Dg_\myPath(K)$ when thinking of the matrix space $M(3\times p, \R)$ as $\R^{3p}$. We say that $[Dg_\myPath(K)]$ is invertible if every matrix in $[Dg_\myPath(K)]$ is invertible.

\begin{definition} \label{def:gimbalLockAvoided}
Using the same setup as in Definition~\ref{def:defGimbalFunction}, let $K\subset \R^p$ be a box. We say that $K$ avoids gimbal lock with respect to $\myPath$ if $[Dg_\myPath(K)]$ is invertible.
\end{definition}

\begin{remark}
The term gimbal lock is inspired by the mechanical device called gimbal or Cardan suspension used to achieve an arbitrary rotation in $\mySO{3}$, see Figure~\ref{fig:gimbal}. Letting $d_1, d_2$ and $d_3$ denote the Euler angles at the joints from the grounding boxes to the inner-most ring, the rotation achieved by the gimbal is given by the matrix
$$(d_1,d_2,d_3)\mapsto R_{d_1} \left(\begin{smallmatrix} &  & -1\\ & 1 & \\ 1 & & \end{smallmatrix}\right) R_{d_2} \left(\begin{smallmatrix} &  & 1\\ & 1 & \\ -1 & & \end{smallmatrix}\right) R_{d_3}$$
and a configuration of $(d_1, d_2, d_3)$ where this map does not have full rank is known as gimbal lock. Note the formal similarity of this product to the gimbal matrix $m_\myPath(d_1,\dots, d_p)$.
\end{remark}

The following lemma is not useful in the general setting, but illustrates the principle we will use later (also see Example~\ref{example:singleEdgeViolation}):

\begin{lemma}
Using the same setup as in Definition~\ref{def:defGimbalFunction}, let $K\subset \R^p$ be a box avoiding gimbal lock with respect to $\myPath$ such that $(2\pi, \dots, 2\pi) \in K$. If $(\delta_1,\dots,\delta_p)\in K$, then all $\delta_l=2\pi$ and the cocycle extends to $L$. \label{lemma:noGimbalImpliesAllEqnsVertexLink}
\end{lemma}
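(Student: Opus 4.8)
The plan is to reduce the lemma to the injectivity of the gimbal function $g_\myPath$ on the box $K$. By Lemma~\ref{lemma:gimbalMatrixId} both $(2\pi,\dots,2\pi)$ and $(\delta_1,\dots,\delta_p)$ are mapped to $(0,0,0)$ by $g_\myPath$, and both points lie in $K$ by hypothesis; so once $g_\myPath$ is known to be injective on $K$ we immediately get $(\delta_1,\dots,\delta_p)=(2\pi,\dots,2\pi)$, i.e. every $\delta_l=2\pi$.

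To prove injectivity I would use a mean value argument. The map $g_\myPath$ is $C^1$ on $K$ --- in fact real-analytic, since each entry of the gimbal matrix $m_\myPath(d_1,\dots,d_p)$ is a polynomial in the sines and cosines of the $d_l$ and in fixed real constants --- and $K$, being a box, is convex. Hence for $x,y\in K$ the whole segment from $x$ to $y$ lies in $K$, and applying the fundamental theorem of calculus to $t\mapsto g_\myPath(x+t(y-x))$ gives $g_\myPath(y)-g_\myPath(x)=M(y-x)$ with $M=\int_0^1 Dg_\myPath(x+t(y-x))\,dt$. Reading this entrywise, each entry of $M$ is an average over $t\in[0,1]$ of the corresponding entry of $Dg_\myPath$ along that segment, so it lies in the corresponding coordinate interval of the interval closure $[Dg_\myPath(K)]$; thus $M\in[Dg_\myPath(K)]$. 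Since $K$ avoids gimbal lock (Definition~\ref{def:gimbalLockAvoided}), every matrix in $[Dg_\myPath(K)]$ is invertible, so $M$ is, and $g_\myPath(y)=g_\myPath(x)$ forces $y=x$. Applying this with $x=(2\pi,\dots,2\pi)$ and $y=(\delta_1,\dots,\delta_p)$ gives $\delta_l=2\pi$ for all $l$.

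It remains to see that the cocycle on $L\setminus\bigcup P_l$ extends to $L$. The $2$-cells of $L$ are the $2$-cells of $L\setminus\bigcup P_l$ together with $P_1,\dots,P_p$, and the cocycle condition already holds on the former. For a cell $P_l$, the edges of $\partial P_l$ (with the orientation induced from $P_l\subset L$) are labeled by rotations $R_\omega$ about the $z$-axis; since these commute and $R_\omega R_{\omega'}=R_{\omega+\omega'}$, the product of the labels around $\partial P_l$ equals $R_{\delta_l}=R_{2\pi}$, the identity. So the cocycle condition holds on every $2$-cell of $L$, and the cocycle extends.

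The only genuinely delicate point I anticipate is the enclosure step: one must justify that the averaged Jacobian $M$ of a $C^1$ map over a convex box always lands inside the axis-aligned interval closure of its Jacobian over that box (this is exactly what makes Definition~\ref{def:gimbalLockAvoided} the right hypothesis), together with the implicit regime $p\le 3$ in which ``invertible'' for the $3\times p$ matrices of $[Dg_\myPath(K)]$ forces $\ker M=0$. The remaining steps are routine bookkeeping with the definitions.
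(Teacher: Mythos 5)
Your proposal is correct and follows essentially the same route as the paper: the paper's proof simply cites the standard interval-analysis fact that invertibility of $[Dg_\myPath(K)]$ makes $g_\myPath$ injective on $K$ and then applies Lemma~\ref{lemma:gimbalMatrixId}, whereas you additionally spell out that standard fact via the mean-value/averaged-Jacobian argument and make explicit the (easy) extension step for the cells $P_l$.
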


\begin{proof}
A standard result about the interval Newton method says that $g_\myPath$ is injective on $K$ since $[Dg_\myPath(K)]$ is invertible. Thus, Lemma~\ref{lemma:gimbalMatrixId} implies that $\delta_l=2\pi$.
\end{proof}

\section{Extending cocycles on triangulations} \label{sec:extendCocycles}

Let $\trig$ be a triangulation with an assignment of edge parameters $\nu_e<-1$ such that each $G_\Delta$ is realized (i.e., fulfills the conditions of Lemma~\ref{lemma:singleGeomSimp}). Let $E^\sim \cup E^==E(\trig)$ be a partition of the edges into two disjoint sets where $|E^\sim| = 3o$ with $o=|V(\trig)|$ being the number of vertices of $\trig$\negSpace{}. Assume that we know that $\Theta_e=2\pi$ for every edge $e\in E^=$, but we only know that $\Theta_e$ is close to $2\pi$ for $e\in E^\sim$. Thus, the $\myPGL{2}{\C}$-cocycle in Theorem~\ref{thm:cocycleExtendingToTrig} might only extend to $\hat\trig^==\hat\trig\setminus \mathrm{Prisms}(E^\sim)$, the complex where the prisms (including the top and bottom face) about the edges in $E^\sim$ have been removed. The goal of this section is to give a criterion that forces $\Theta_e=2\pi$ for all $e\in E(\trig)$, so that the conclusions of Theorem~\ref{thm:cocycleExtendingToTrig} apply and the edge parameters $\nu_e$ yield a hyperbolic structure for $\trig$\negSpace{}.

Let $V(\trig)=\{v_1,\dots,v_o\}$ be the vertices of $\trig$\negSpace{}. If we drop all all $\alpha$-edges and all $2$- and $3$-cells adjacent to any $\alpha$-edge from $\hat\trig$, the remaining complex has a connected component for each vertex $v_k$. We call such a connected component the ``vertex link'' of $v_k$ and denote it by $L_k$. Starting with $\hat\trig^=$ instead of $\hat\trig$, we similarly obtain a subcomplex $L^=_k\subset L_k$ for each $v_k$. Note that $(L_k,L^=_k)$ is a punctured topological polyhedron as in Definition~\ref{def:puncTopPolyhed} and the $\mySO{3}$-cocycle on $\hat\trig^=$ descends to a $(\mySO{3},\mySO{2})$-cocycle on each $L^=_k$ formed by the $\beta$- and $\gamma$-edges. Let us fix a gimbal loop $\myPath_k$ for each $(L_k,L^=_k)$, giving us gimbal functions $g_1,\dots, g_o$ as in Definition~\ref{def:defGimbalFunction}.

\begin{figure}
\begin{center}
\scalebox{0.7}{
\input{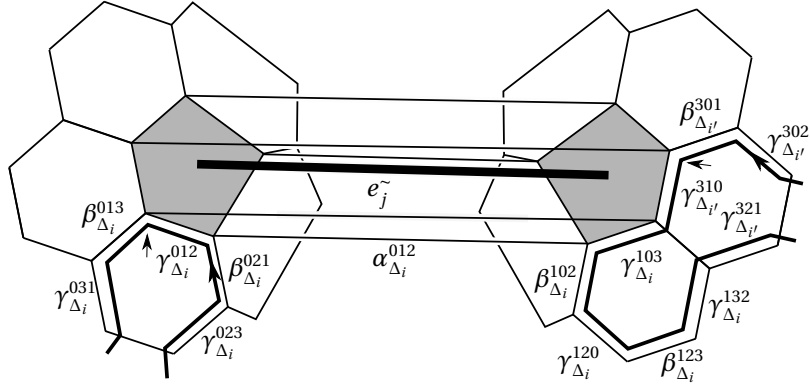}}
\end{center}
\caption{Gimbal loops for two ``vertex links'' (only some part of each is shown). The thick line is the edge in the triangulation.} \label{fig:gimbalLoopsInTriangulation}
\end{figure}

Let us label the edges in $E^\sim = \{e^\sim_1,\dots,e^\sim_{3o}\}$ and $E^==\{e^=_1,\dots, e^=_{m-3o}\}$. We again want to construct a gimbal function, but this time in one variable $T_j$ per edge $e^\sim_j\in E^\sim$ instead of a variable $d_{k,l}$ per removed polygon as in Section~\ref{sec:vertexCocycles}. Note that each edge $e^\sim_j\in E^\sim$ corresponds to two polygons $P_{k,l}$ and $P_{k',l'}$ that have been removed (from the ``vertex links'' $L_k$ and $L_{k'}$ in $\hat\trig$) to obtain $L^=_k$ and $L^=_{k'}$ (with $k$ and $k'$ not necessarily distinct), see Figure~\ref{fig:gimbalLoopsInTriangulation}.  We obtain a gimbal function in $(T_1,\dots, T_{3o})$ for each vertex by setting $d_{k,l}=d_{k',l'}=T_j$ in Definition~\ref{def:defGimbalFunction}. We combine these gimbal functions into a single one:

\begin{definition} \label{def:trigGimbalFunction}
Let $\trig$\negSpace{} be a triangulation with an assignment of edge parameters $\nu_e<-1$ such that each $G_\Delta$ fulfills the conditions of Lemma~\ref{lemma:singleGeomSimp}. Let $E^\sim = \{e^\sim_1,\dots,e^\sim_{3o}\}$ and $E^==\{e^=_{1},\dots, e^=_{m-3o}\}$ be a partition of $E(\trig)$ and fix gimbal loops $\myPath_1,\dots,\myPath_o$. The gimbal function is given by
$$g: \R^{3o} \to \R^{3 o}, \quad (T_1,\dots, T_{3o}) \mapsto \left(g_1(d_{1,1}, \dots, d_{1, p_1}), \dots, g_o(d_{o,1}, \dots, d_{o,p_o})\right).$$
A box $K\subset \R^q$ avoids gimbal lock if $[Dg(K)]$ is invertible.
\end{definition}

Note that the orientation on $(L_k,L^=_k)$ in the above definitions must be chosen such that the boundary of a small hexagon in $(L_k,L^=_k)$ contains a $\gamma^{\sigma(0)\sigma(1)\sigma(2)}$ for $\sigma\in \permA_4$.

\begin{example}
Figure~\ref{fig:gimbalLoopsInTriangulation} shows examples of gimbal loops for two ``vertex links'' of $\hat{\trig}^=$. The corresponding gimbal matrices would be given by
$$\cdots \gamma^{031}_{\Delta_i} \cdot
 \beta^{013}_{\Delta_i} \cdot 
 R_{T_j} \cdot
  \gamma^{012}_{\Delta_i} \cdot
   \beta^{021}_{\Delta_i}
     \cdots
      \quad\mbox{and}\quad
       \cdots
          \gamma^{103}_{\Delta_i}
          \cdot
           \gamma^{310}_{\Delta_{i'}}
           \cdot
            R_{T_j} \cdot \beta^{301}_{\Delta_{i'}}
            \cdot
             \gamma^{302}_{\Delta_{i'}}\cdots .$$
The gimbal function $g$ for the triangulation is obtained by taking the upper triangular entries of the gimbal matrix for each vertex.
\end{example}

\begin{theorem} \label{thm:mainThm}
Let $\trig\negSpace{}$, $\nu_e$, $E^\sim$ and $E^=$, $\myPath_1, \dots, \myPath_o$ and $g$ be as in Definition~\ref{def:trigGimbalFunction}. Let $K\subset \R^{3o}$ be a box avoiding gimbal lock such that $(2\pi, \dots, 2\pi)\in K$. If
\begin{enumerate}
\item $(\Theta_{e^\sim_1},\dots,\Theta_{e^\sim_{3o}})\in K$ and
\item $\Theta_{e^=_1}=\dots=\Theta_{e^=_{m-3o}}=2\pi$
\end{enumerate}
then $\Theta_e=2\pi$ for all $e\in E(\trig)$, so the edge parameters $\nu_e$ yield a hyperbolic structure on $\trig$\negSpace{}.
\end{theorem}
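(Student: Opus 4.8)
The plan is to reduce the theorem to Lemma~\ref{lemma:noGimbalImpliesAllEqnsVertexLink} applied to the single combined gimbal function $g$ from Definition~\ref{def:trigGimbalFunction}. First I would observe that because each $G_\Delta$ is realized, Theorem~\ref{thm:cocycleExtendingToTrig}\eqref{cocycCondA} gives the natural $\myPGL{2}{\C}$-cocycle on $\overline{\trig}$, and hypothesis (2) (that $\Theta_e = 2\pi$ for every $e\in E^=$) means the cocycle extends over all prisms about edges of $E^=$, i.e.\ to $\hat\trig^=$. Restricting to $\beta$- and $\gamma$-edges and dropping everything adjacent to an $\alpha$-edge yields, for each vertex $v_k$, the $(\mySO{3},\mySO{2})$-cocycle on $L^=_k$ as described in the text. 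So the hypotheses of Definition~\ref{def:defGimbalFunction} are met for each $(L_k, L^=_k, \myPath_k)$.

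Next I would identify, for each vertex $v_k$ and each removed polygon $P_{k,l}$, the quantity $\delta_{k,l}$ of the paragraph preceding Lemma~\ref{lemma:gimbalMatrixId}: it is the sum of the boundary angles $\omega$ of $\partial P_{k,l}$, and by construction of the cocycle (Lemma~\ref{lemma:coycleAssignment}, where the $\gamma$-matrices carry the dihedral angles $\theta_{ij}$) these boundary edges are exactly the $\gamma$-edges around the edge $e^\sim_j\in E^\sim$ to which $P_{k,l}$ corresponds, so $\delta_{k,l} = \Theta_{e^\sim_j}$. Since each $e^\sim_j$ corresponds to precisely two removed polygons $P_{k,l}$ and $P_{k',l'}$ and we set $d_{k,l} = d_{k',l'} = T_j$ when forming $g$, the tuple $(\delta_{k,1},\dots,\delta_{k,p_k})$ over all $k$ is exactly the image of $(\Theta_{e^\sim_1},\dots,\Theta_{e^\sim_{3o}})$ under the substitution $T_j\mapsto \Theta_{e^\sim_j}$. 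Applying Lemma~\ref{lemma:gimbalMatrixId} to each $\myPath_k$ gives $g_k(\delta_{k,1},\dots,\delta_{k,p_k}) = (0,0,0)$, hence $g(\Theta_{e^\sim_1},\dots,\Theta_{e^\sim_{3o}}) = 0$; and the same lemma with all $d_l = 2\pi$ gives $g(2\pi,\dots,2\pi) = 0$.

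Now I would run the argument of Lemma~\ref{lemma:noGimbalImpliesAllEqnsVertexLink} verbatim on $g$: since $K$ avoids gimbal lock, $[Dg(K)]$ is invertible, so by the standard interval Newton injectivity result $g$ is injective on $K$. Both $(2\pi,\dots,2\pi)$ and $(\Theta_{e^\sim_1},\dots,\Theta_{e^\sim_{3o}})$ lie in $K$ (the latter by hypothesis (1)) and have the same image $0$ under $g$, so they are equal; thus $\Theta_{e^\sim_j} = 2\pi$ for all $j$. Combined with hypothesis (2), this gives $\Theta_e = 2\pi$ for every $e\in E(\trig)$, so Theorem~\ref{thm:cocycleExtendingToTrig}\eqref{cocycCondC} (equivalently Theorem~\ref{thm:hypStruct}) applies and the parameters $\nu_e$ yield a hyperbolic structure on $\trig$.

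The one point requiring care — the part I expect to be the main obstacle — is the bookkeeping in the second paragraph: verifying that the boundary-angle sum $\delta_{k,l}$ of the punctured polygon really equals the edge angle sum $\Theta_{e^\sim_j}$ of the corresponding triangulation edge, and that the orientation convention for $(L_k,L^=_k)$ (so that small hexagon boundaries carry $\gamma^{\sigma(0)\sigma(1)\sigma(2)}$ with $\sigma\in\permA_4$, as noted after Definition~\ref{def:trigGimbalFunction}) is consistent across all the prisms glued around $e^\sim_j$. Once the identification $g(\Theta_{e^\sim_1},\dots,\Theta_{e^\sim_{3o}}) = g(2\pi,\dots,2\pi)$ is pinned down, the rest is a direct invocation of the interval Newton injectivity already used in Lemma~\ref{lemma:noGimbalImpliesAllEqnsVertexLink}.
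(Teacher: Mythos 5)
Your proposal is correct and follows essentially the same route as the paper: apply Lemma~\ref{lemma:gimbalMatrixId} at each vertex to get $g(2\pi,\dots,2\pi)=g(\Theta_{e^\sim_1},\dots,\Theta_{e^\sim_{3o}})=(0,\dots,0)$, then invoke the interval-Newton injectivity argument of Lemma~\ref{lemma:noGimbalImpliesAllEqnsVertexLink} on the box $K$ to force $\Theta_{e^\sim_j}=2\pi$, and conclude via Theorem~\ref{thm:cocycleExtendingToTrig}. The bookkeeping you flag (identifying the boundary-angle sums $\delta_{k,l}$ with $\Theta_{e^\sim_j}$) is left implicit in the paper's one-line proof, so spelling it out is a welcome addition rather than a deviation.
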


\begin{proof}
By applying Lemma~\ref{lemma:gimbalMatrixId} to each vertex $v_1,\dots, v_o$, we see that $$g(2\pi,\dots,2\pi)=g(0,\dots,0)=(0,\dots,0).$$ Hence, the proof from Lemma~\ref{lemma:noGimbalImpliesAllEqnsVertexLink} applies here as well and we have $\Theta_{e^\sim_1}=\dots=\Theta_{e^\sim_{3o}}=2\pi$. Therefore, all conditions of Theorem~\ref{thm:cocycleExtendingToTrig} are fulfilled.
\end{proof}

\section{Examples of (non-)gimbal lock} \label{section:examples}

This section is giving examples where the condition $\Theta_e=2\pi$ from Theorem~\ref{thm:hypStruct} is known to be fulfilled for most but not all edges.
Let $\trig$ be a finite, orientable triangulation with $o$ vertices and $m$ edges. We partition the edges as
  $E^\sim=\{e^\sim_1,\dots,e^\sim_l\}$ and $E^==\{e^=_1,\dots,e^=_{m-l}\}$ depending on whether we know that $\Theta_e$ is close to or, respectively, exactly equal to $2\pi$.

 Geometrically, this yields a singular hyperbolic structure with cone singularities along the edges with $\Theta_{e^\sim_i}\not=2\pi$. Looking at a vertex $v$ connected to such an edge, its neighborhood is the hyperbolic cone of its link which is $S^2$ topologically but has a spherical cone structure different from the standard $S^2$. In this section, we extend Definition~\ref{def:trigGimbalFunction} to the case where $l=|E^\sim|$ might not be $3o$ and say that gimbal lock is avoided if $Dg$ has no kernel.

\begin{example} \label{example:singleEdgeViolation} Consider the case $l=1$ with $e^\sim_1$ connecting two distinct vertices $v_1$ and $v_2$. Assume $\Theta_{e^\sim_1}\not=2\pi$. The links of $v_1$ and $v_2$ would have a spherical cone structure with exactly one singularity. These do not exist, so $\Theta_{e'}=2\pi$. This also follows from Lemma~\ref{lemma:noGimbalImpliesAllEqnsVertexLink}.
\end{example}

\begin{example}[Gimbal lock]\label{ex:gimbalLockGeodesic} Let us start with a (non-singular) hyperbolic structure on $\trig$\negSpace{} such that the edges $e^\sim_1,\dots, e^\sim_l$ form a simple closed geodesic. Such a hyperbolic structure can often be deformed so that the geodesic becomes a cone singularity\footnote{For example, \texttt{Manifold("m003(-3.3,1.1)")} in SnapPy gives a geometric spun triangulation for the singular hyperbolic structure on the closed Weeks manifold \texttt{m003(-3,1)} with cone angle $2\pi/1.1$.}, i.e., such that $\Theta_{e^\sim_1}=\cdots=\Theta_{e^\sim_l}$ are close but not equal to $2\pi$. Note that the spherical structure of the link of a vertex $v_i$ connected to $e^\sim_i$ and $e^\sim_{i+1}$ gets deformed to have two conical singularities at antipodal points with equal cone angle. In the language of cocycles, pick a gimbal loop $\Gamma_i$ for $v_i$ as in Definition~\ref{def:gimbalLoop} such that the associated gimbal matrix is of the form 
\begin{equation} \label{eqn:gimbalMatrixAntipodal}
m_{\Gamma_i}=R_{\Theta_{e^\sim_i}}\cdot \prod\nolimits_1 \cdot R_{\Theta_{e^\sim_{i+1}}}\cdot \prod\nolimits_1^{-1} = R_{\Theta_{e^\sim_i}} R_{-\Theta_{e^\sim_{i+1}}}
\end{equation}
where $\prod_1\in\mySO{3}$ is a rotation by $\pi$ about an axis orthogonal to $z$. Then $m_{\Gamma_i}=\mathrm{Id}$ whenever $\Theta_{e^\sim_i}=\Theta_{e^\sim_{i+1}}$, so $Dg$ has a non-trivial kernel. If we add edges to $E^\sim$ such that $|E^\sim|=3o$, $Dg$ still has a non-trivial kernel, so we have gimbal lock and thus not a contradiction to Theorem~\ref{thm:mainThm}.
\end{example}

\begin{example}[Perturbing previous example to avoid gimbal lock] \label{ex:avoidGimbalLockGeodesic}
Consider the situation in the previous example but move one vertex, say $v_1$, slightly. Assume $\Theta_{e^\sim_1}\not=2\pi$. The link of $v_1$ would be a spherical cone structure with exactly two cone singularities having distance $<\pi$ (when scaling such that the curvature is $1$). Such a cone structure does not exist \cite[Corollary~3.5]{mondelloPanov}. In the language of cocycles, $\prod_1$ in Equation~\ref{eqn:gimbalMatrixAntipodal} is now a rotation by an angle strictly between $0$ and $\pi$, so $m_{\Gamma_1}$ is the product of two rotations about two different axes and thus only the identity when $\Theta_{e^\sim_l}=\Theta_{e^\sim_{1}}=2\pi$. In fact, this forces $\Theta_{e^\sim_1}=\dots=\Theta_{e^\sim_l}=2\pi$.  
\end{example}

\begin{example}[A non-hyperbolic manifold fulfilling all but six edge equations]
Consider the two tetrahedron triangulation of $S^3$ obtained by identifying the boundary of the first tetrahedron with the boundary of the second tetrahedron via the identity map. Assign the same length to all edges such that each tetrahedron has a hyperbolic structure individually but all of the six edges equations are violated. Perform several 1-4 moves on one or both of the tetrahedra preserving the hyperbolic structure on each tetrahedron. This gives an example of a non-hyperbolic manifold where more than $m-3o$ but not all edge equations are fulfilled. Compare this to the ideal case where Neumann-Zagier (see Appendix) state that all edge equations must be fulfilled whenever $m-o$ are fulfilled.
\end{example}

We give a characterization of gimbal lock in 1-vertex triangulations later in Theorem~\ref{thm:singleVertexTriangulationGimbalLockCondition}.

\section{Algorithm} \label{sec:algo}

\subsection{Overview}

Let $\trig$\negSpace{} be an oriented, finite 3-dimensional triangulation. Let $o=V(\trig)$ be the number of vertices and index the edges $E(\trig)=\{e_1,\dots,e_m\}$.
Assume we are given floating-point approximations for the edge parameters $\nu_{e_1},\dots,\nu_{e_m}$ such that the $\Theta_{e_1}, \dots, \Theta_{e_m}$ are approximately $2\pi$ (e.g, using the program Orb \cite{orb} based on the methods described in \cite[Section~2.2]{heardThesis}).

We will either fail in one of the steps or obtain intervals $\du{\nu_{e_1}},\dots,\du{\nu_{e_m}}$ guaranteed to contain values for $\nu_{e_1},\dots,\nu_{e_m}$ yielding a hyperbolic structure on $\trig$\negSpace{} as follows:
\begin{enumerate}
\renewcommand{\labelenumi}{\Roman{enumi}.}
\renewcommand{\theenumi}{\Roman{enumi}}
\item \label{step:submatrix} Find a subsystem of equations of full rank:\\
Since the given approximations for the edge parameters are close to a hyperbolic structure, evaluating the Jacobian
\begin{equation} \label{eqn:jacobian}
M=\left(\frac{\partial \Theta_{e_j}}{\partial \nu_{e_i}}\right)_{i=1,\dots,m;j=1,\dots,m}
\end{equation}
gives a matrix close to a singular matrix that we expect to have rank $m-3o$ (see Conjecture~\ref{conjecture:main}). Pick a suitable set of  $m-3o$ rows and columns such that the resulting submatrix of $M$ has full rank.\\
This corresponds to picking two partitions of the edges of $\trig$
\begin{eqnarray*}
E(\trig)=E^\sim \cup E^=&\mbox{with}& E^\sim=\{e^\sim_1,\dots,e^\sim_{3o}\},~E^= = \{e^=_1,\dots,e^=_{m-3o}\}\\
E(\trig)=E^\mathrm{fixed} \cup E^\mathrm{var}& \mbox{with}& E^\mathrm{fixed} = \{e^{\mathrm{fixed}}_1,\dots,e^{\mathrm{fixed}}_{3o}\}, ~E^\mathrm{var} = \{e^{\mathrm{var}}_1,\dots,e^{\mathrm{var}}_{m-3o}\}
\end{eqnarray*}
such that 
$$M'=\left(\frac{\partial \Theta_{e^=_j}}{\partial \nu_{e^\mathrm{var}_i}}\right)_{i=1,\dots,m-3o;j=1,\dots,m-3o}$$
has full rank near the given values. Keep the values of $\nu_{e^\mathrm{fixed}_i}$ fixed from now on.
\item \label{step:intervalNewton} Find an interval solution to the above subsystem of equations:\\
Using interval Newton method or Krawczyk test, find large enough intervals 
$$\du{\nu_{e^\mathrm{var}_1}},\dots,\du{\nu_{e^\mathrm{var}_{m-3o}}}$$
for the edges in $E^\mathrm{var}$ such that they contain a point where $\Theta_{e^=_j}=2\pi$ for every $e^=_j\in E^=$. To have intervals $\du{\nu_{e_1}},\dots,\du{\nu_{e_m}}$ for all edges of the triangulation, simply use the interval $[\nu_{e^\mathrm{fixed}_i},\nu_{e^\mathrm{fixed}_i}]$ containing only 
the fixed value $\nu_{e^\mathrm{fixed}_i}$ for each remaining edge $e^\mathrm{fixed}_i\in E^\mathrm{fixed}$.
\item \label{step:validGram} Ensure solutions are valid for a simplex:\\
Using the intervals for each edge, use interval arithmetic to verify the conditions of Lemma~\ref{lemma:singleGeomSimp} for each simplex $\Delta$.
\item \label{step:approxEdgeEqn} Ensure that the remaining edge equations are ``approximately'' fulfilled:\\
Use these intervals to compute intervals $\du{\Theta_{e^\sim_1}},\dots,\du{\Theta_{e^\sim_{3o}}}$ for the sums of dihedral angles adjacent to an edge in $E^\sim$ to ensure that $2\pi \in \du{\Theta_{e^\sim_j}}$ for every such edge.
\item \label{step:avoidGimbal} Ensure that the remaining edge equations are fulfilled exactly:\\
Find a gimbal loop $\myPath_1,\dots,\myPath_o$ for each vertex and consider the resulting gimbal function $g$. Ensure that $K=\du{\Theta_{e^\sim_1}}\times\cdots\times\du{\Theta_{e^\sim_{3o}}}$ avoids gimbal lock for the intervals computed in the previous step using interval arithmetics.
\end{enumerate}

\subsection{Computing the Jacobian}

Note that the formulas given in \cite[Lemma~2.5]{heardThesis} for the derivatives $\partial\theta_{ij}/\partial{v_{mn}}$ (where $v_{mn}=v_{nm}$) run into a division by zero when $\theta_{ij}=\pi/2$. To obtain formulas avoiding this problem, we take the total derivative of Equation~\ref{eqn:dihedral}
$$\frac{\partial\theta_{ij}}{\partial{v_{mn}}}
=\frac{-1}{\sqrt{c_{ii}c_{jj}-c_{ij}^2}} \left(\frac{\partial c_{ij}}{\partial v_{mn}} - \frac{c_{ij}}{2 c_{ii}}\cdot \frac{\partial c_{ii}}{\partial v_{mn}} - \frac{c_{ij}}{2c_{jj}}\cdot \frac{\partial c_{jj}}{\partial v_{mn}}\right)
 $$ and note that each $\partial c_{kl}/\partial v_{mn}=(-1)^{k+l} \partial (\det G_{kl}) / \partial v_{mn}$ is, up to sign, the sum of at most two cofactors of $G_{kl}$, namely, the ones corresponding to the $2\times 2$-matrices that can be obtained by deleting from $G_{kl}$ the row and column corresponding to row $m$ or $n$ and column $n$, respectively, $m$ of $G$.

Using this, we can compute the Jacobian $M$ in \eqref{eqn:jacobian} using floating point (in Step~\ref{step:submatrix}), respectively, interval arithmetics (in Step~\ref{step:intervalNewton}) avoiding the need for automatic differentiation.

\subsection{Step~\ref{step:submatrix}: Finding a submatrix of full rank}

This step is necessary since interval methods (e.g., interval Newton method or Krawczyk test) only work for systems with invertible Jacobian. To obtain a subsystem of full rank, we apply the following algorithm to $M$ with $h=m-3o$:
\begin{center}
\fbox{
\parbox{12.7cm}{
\begin{tabular}{rp{9.8cm}}
{\bf Input:} & Square-matrix $M=(m_{r,c})$ with expected rank $h$.\\
{\bf Output:} & Sets $R$ and $S$ of indices of $h$ rows, respectively, $h$ columns. The submatrix $M'$ of $M$ formed by these rows and columns will have full rank.\\
{\bf Algorithm:}
\end{tabular}
\begin{enumerate}
\itemsep0em 
\renewcommand{\labelenumi}{\arabic{enumi}.}
\renewcommand{\theenumi}{\arabic{enumi}.}
\renewcommand{\labelenumii}{\arabic{enumii}.}
\renewcommand{\theenumii}{\arabic{enumii}}
\item $R\leftarrow\{\}$. $C\leftarrow\{\}$.
\item Repeat $h$ times:\label{step:Repeat}
\begin{enumerate}
\itemsep0em 
\item Let $(r,c)$ be the index of the entry $m_{r,c}$ in $M$ with the largest absolute value when ignoring the rows in $R$ and columns in $C$.\label{substep:maxEntry}
\item Add multiples of row $r$ to all other rows of $M$ to make all entries except for $m_{r,c}$ in column $c$ zero.
\item Add multiples of column $c$ to all other columns of $M$ to make all entries except for $m_{r,c}$ in row $r$ zero.
\item $R\leftarrow R\cup\{r\}$. $C\leftarrow C\cup\{c\}$.
\end{enumerate}
\end{enumerate}
}
}
\end{center}
\begin{remark} Note that the algorithm has the following stability properties:
\begin{enumerate}
\item We obtain the same set of rows and columns of $M$ when permuting the rows or columns of the input matrix $M$, i.e., the result is obtained by applying the same permutation to $R$, respectively, $C$ --- unless there are ties in Step \ref{substep:maxEntry}.
\item Transposing $M$ results in interchanging $R$ and $C$.
\end{enumerate}
\end{remark}
\begin{remark}
This algorithm is a simplified version of $LDU$-factorization with full pivoting, i.e., a decomposition $M=PLDUP'$ where $P$ and $P'$ are permutation matrices, $L$ and $U$ unit-lower, respectively, unit-upper triangular matrices and $D$ a diagonal matrix.
 \end{remark}

\subsection{Step~\ref{step:intervalNewton}}

This step is a straightforward application of the interval Newton method or Krawczyk test to the equations $\Theta_{e^=_j}-2\pi = 0$ in variables $\nu_{e^\mathrm{var}_i}$ (keeping $\nu_{e^\mathrm{fixed}_i}$ fixed). Note that even for high precision solutions, computing the approximate inverse in the Krawczyk test using IEEE754 double-precision floating point numbers is usually sufficient.

If we are interested in increasing the precision of the solution, we can optionally perform the ordinary Newton method to the subsystem from Step~\ref{step:submatrix} before Step~\ref{step:intervalNewton}.

\subsection{Steps~\ref{step:validGram} and \ref{step:approxEdgeEqn}}

These conditions are straightforward to check with interval arithmetics given the comment about the Budan-Fourier theorem in Lemma~\ref{lemma:singleGeomSimp}.

\subsection{Step~\ref{step:avoidGimbal}: Finding gimbal loops}

We need to pick a gimbal loop $\myPath_k$ in each ``vertex link'' $(L_k,L^=_k)$. Note that this complex consists of the small hexagons (with alternating $\beta$- and $\gamma$-edges) of the doubly-truncated simplices (see Figure~\ref{fig:DoublyTruncated}) and polygons coming from the ends of the prisms (see Figure~\ref{fig:Prism} and \ref{fig:gimbalLoopsInTriangulation}). For a vertex $v_k$, pick a hexagon in $L^=_k$, mark it as ``used'' and starting with its boundary (oriented such that it traverses a $\gamma^{\sigma(0)\sigma(1)\sigma(2)}$ with $\sigma\in A_4$), expand this edge-loop until it touches each boundary component of $L^=_k$ as follows: pick a $\beta$-edge of the edge-loop that is adjacent to an unused hexagon $H$ and replace the $\beta$-edge by the five other edges of $H$, marking $H$ as ``used'' (see Figure~\ref{fig:findingAGimbalLoop}). It is faster to exhaust all $\beta$-edges of one hexagon first in breadth-first search manner (vs depth-first search) before moving on to the next hexagon.

\begin{figure}[h]
\begin{center}
\scalebox{0.14}{
\input{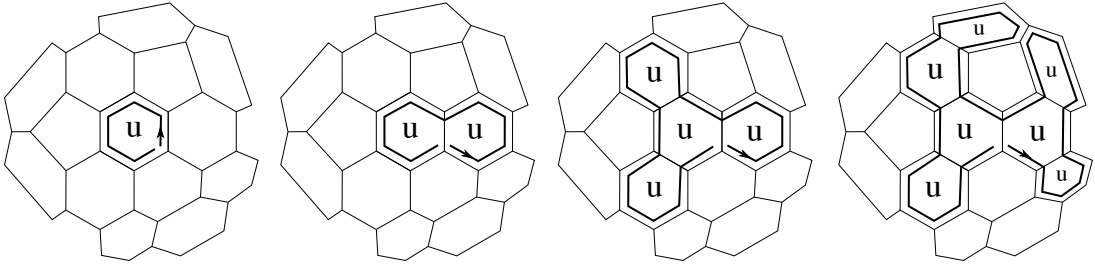}}
\end{center}
\caption{Procedure to find a gimbal loop $\myPath$.} \label{fig:findingAGimbalLoop}
\end{figure}

\subsection{Step~\ref{step:avoidGimbal}: Computing the gimbal function's derivative}

We then need to compute $[Dg(K)]$ which boils down to computing the derivatives $\partial m_{\myPath_k}/\partial T_i$ of the gimbal matrices $m_{\myPath_k}$ associated to the gimbal loops $\myPath_k$. Focusing on one $i$ and $k$, note that $m_{\myPath_k}$ is given by an alternating product the form
$$
\prod\nolimits_1 \cdot R_{T_i}\cdot \prod\nolimits_2  \cdot R_{T_i} \cdot \prod\nolimits_3 \cdot R_{T_i} \cdots \prod\nolimits_q
$$
where $\prod\nolimits_k$ stands for a product of $\beta$- and $\gamma$-matrices and rotations $R_{T_{i'}}$ with $i'\not=i$ ($q$ is actually at most three since an edge of $\trig$\negSpace{} has two ends which might or might not end in the same vertex). We can then compute the derivative as
\begin{eqnarray*}
\frac{\partial m_{\myPath_k}}{\partial T_i} &=& \phantom{+} \prod\nolimits_1 \cdot R'_{T_i}\cdot \prod\nolimits_2  \cdot R_{T_i} \cdot \prod\nolimits_3 \cdots R_{T_i} \cdot \prod\nolimits_q\\
 & & + ~ \cdots \\
 & & +\prod\nolimits_1 \cdot R_{T_i}\cdot \prod\nolimits_2  \cdot R_{T_i} \cdot \prod\nolimits_3 \cdots R'_{T_i} \cdot \prod\nolimits_q \quad \mbox{where}~
R'_\omega = \left(\begin{array}{ccc} -\sin\omega & -\cos\omega & \\ \phantom{-}\cos\omega & -\sin\omega & \\ & & 0\end{array}\right).
\end{eqnarray*}

We need to evaluate this using the intervals from Step~\ref{step:intervalNewton} for the $\beta$- and $\gamma$-matrices and the intervals $\du{\Theta_{e^\sim_1}},\dots,\du{\Theta_{e^\sim_{3o}}}$ from Step~\ref{step:approxEdgeEqn} for $R'_{T_i}$ and all the $R_{T_j}$.

\subsection{Step~\ref{step:avoidGimbal}: Verifying invertibility}

To show that a square matrix $m$ with real interval entries is invertible (in the sense used in Definition~\ref{def:gimbalLockAvoided} and \ref{def:trigGimbalFunction}), we can find an approximate inverse $n$ (usually IEEE754-double precision is sufficient) and verify that each entry of $mn-\mathrm{Id}$ has absolute value strictly less than $1/r^2$ where $r$ is the number of rows.

\section{Results}  \label{sec:results}

Our implementation of the algorithm in Section~\ref{sec:algo} is available at \cite{veriClosedRepo}. Lists of all knots and links from Theorem~\ref{thm:hyperbolicBranchedDoubleCovers} as well as the isomorphism signatures of the finite triangulations we used are available at \cite{veriClosedData}.

To produce the input to the algorithm, we used SnapPy \cite{SnapPy} to produce a finite triangulation of a manifold and Orb \cite{orb} to find unverified floating point edge parameters. Note that there are finite triangulations that admit hyperbolic structures but Orb is unable to find one. However, for all examples of hyperbolic manifolds considered here, we are always able to make Orb succeed in finding edge parameters by randomizing the finite triangulation in SnapPy several times.

We were able to verify a hyperbolic structure on a finite triangulation of each of the 11031 orientable closed manifolds in the Hodgson-Weeks census \cite{hwcensus} and the 21962 genus 2 surface bundles and 3100 genus 3 surface bundles in the census by Bell \cite{bellBundleCensus}. In particular, we have an independent proof of \cite[Theorem~5.2]{hikmot} that all manifolds in SnapPy's \texttt{OrientableClosedCensus} are hyperbolic.

To prove Theorem~\ref{thm:hyperbolicBranchedDoubleCovers}, we went through all knots and links up to 15, respectively, 14 crossings tabulated by Hoste-Thistlethwaite\footnote{See \texttt{HTLinkExteriors}, \texttt{AlternatingKnotExteriors}, and \texttt{NonalternatingKnotExteriors} in SnapPy.} and used SnapPy to produce the branched double cover. We either used the above method to prove that the resulting manifold is hyperbolic or used Regina \cite{Regina} to prove that it is not hyperbolic by
\begin{itemize}
\item finding the triangulation in Regina's census or
\item recognizing that the triangulation has a structure fitting one of Regina's\\ \texttt{StandardTriangulation}'s making it, e.g., a Seifert fiber space, or
\item finding an essential torus using normal surface theory. 
\end{itemize}
Note that the first two methods sometimes require some randomizations and simplifications of the triangulation in SnapPy to work and that the last method can be quite expensive.

The number of tetrahedra in the triangulations used to prove hyperbolicity was between 9 and 46. Most triangulations are single vertex, few have two vertices, and only one had three vertices.

Testing the algorithm on the bundle census and branched double covers was suggested by Nathan Dunfield since the geodesics isotopic to the components of the branching locus tend to be long and spun triangulations spinning about one of these geodesics very often fail to be geometric. By filling and drilling the triangulation, Dunfield found geometric spun triangulations (about a different geodesic) for the branched double covers of 42367 non-alternating knots and links up to 14 crossings, missing 1593.



\begin{remark} \label{remark:experimentEdgePartition}
We also investigate how the occurrence of gimbal lock depends on the edge partition $E^{\sim}\cup E^==E(\trig)$ with $|E^{\sim}|=3o$ where $o$ is number of vertices. For this, we fix a triangulation and a hyperbolic structure and check numerically whether the derivative $Dg$ of the gimbal function has singular values close to zero for different partitions (exhausting all partitions when $o\leq 2$ and sampling otherwise). We did this for several triangulations of orientable closed census manifolds including some with three and four vertices obtained by performing 1-4 moves. This lead to Conjecture~\ref{conjecture:main}.
\end{remark}

\section{Discussion} \label{sec:discussion}

Let $\trig$\negSpace{} be a finite, orientable triangulation with $o$ vertices and $m$ edges.
We have proven that $\trig$\negSpace{} admits a hyperbolic structure if the checks in each step of the algorithm in Section~\ref{sec:algo} pass. But are there hyperbolic structures which the algorithm cannot verify --- even as we increase the precision and give the algorithm better and better approximations of the edge lengths of the hyperbolic structure as input?

We conjecture that such hyperbolic structures are special and can always be avoided by a random perturbation. We state this in the following conjecture where ``generic'' means that a statement is true except for a closed measure zero set of hyperbolic structures on $\trig$ (we will see later that a natural measure exists on the space of all hyperbolic structures in Theorem~\ref{thm:hypStructAreSubmanifold}):

\begin{conjecture} \label{conjecture:main}
Let $\trig$\negSpace{} be a finite, orientable triangulation with $o$ vertices and $m$ edges admitting a hyperbolic structure. Then a generic hyperbolic structure on $\trig$\negSpace{} gives rise to edge lengths $l_{e_1},\dots, l_{e_m}>0$ or equivalently edge parameters $\nu_{e_1},\dots,\nu_{e_m}\leq -1$ such that 
\begin{enumerate}
\item $M$ in \eqref{eqn:jacobian} has rank $m-3o$ and \label{subconjecture:Rank}
\item any choice of $m-3o$ linearly independent rows from $M$ avoids gimbal lock. More precisely, for any partition $E(\trig)=E^\sim \cup E^=$ into $3o$ and $m-3o$ edges, we have that\label{subconjecture:gimbalLock}
\begin{enumerate}
\item[1.] the rows of $M$ corresponding to the edges in $E^=$ are linearly independent
\end{enumerate}
implies that
\begin{enumerate}
\item[2.] the derivative $Dg$ of the gimbal function in Definition~\ref{def:trigGimbalFunction} is invertible.
\end{enumerate}
\end{enumerate}
\end{conjecture}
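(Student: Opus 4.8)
I would treat the two assertions separately, and then observe that \ref{subconjecture:gimbalLock} essentially reduces to \ref{subconjecture:Rank} together with the symmetry of the edge Jacobian.

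For \ref{subconjecture:Rank}, the plan is an infinitesimal rigidity argument. A vector $\dot\nu\in\ker M$ is a first-order deformation of the edge parameters with $\dot\Theta_e=0$ for every $e$; since the natural cocycle of Lemma~\ref{lemma:coycleAssignment} depends smoothly on $\nu$ wherever the Gram matrices are realized, differentiating it and invoking Theorem~\ref{thm:cocycleExtendingToTrig} over the dual numbers produces a first-order deformation of the holonomy $\rho\colon\pi_1(\trig)\to\myPGL{2}{\C}$, i.e.\ a class in $H^1(\pi_1(\trig);\mathrm{Ad}\,\rho)$. By Weil--Calabi--Garland infinitesimal rigidity of closed hyperbolic $3$-manifolds this group vanishes, so the deformed holonomy is conjugate to $\rho$; conjugating the developing map by the corresponding infinitesimal isometry shows that $\dot\nu$ is induced by an infinitesimal motion of the vertices of the geodesic triangulation. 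Thus $\ker M$ is the image of the $3o$-dimensional space $\mathcal{V}$ of vertex motions, and $\dim\ker M = 3o$ precisely when the geodesic $1$-skeleton is infinitesimally rigid in $\H^3$. For a generic point of $\hypStruct$ (Theorem~\ref{thm:hypStructAreSubmanifold}) this should follow from generic rigidity of the $1$-skeleton of a triangulated closed $3$-manifold, which is where the word ``generic'' is used.

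For \ref{subconjecture:gimbalLock}, fix such a generic structure and a partition with the $E^=$-rows of $M$ independent. The key computation is that at the solution every ``vertex link'' $L_k$ is a round $2$-sphere with the $E^\sim$-punctures, carrying the trivial-holonomy spherical cocycle, so $\partial m_{\myPath_k}/\partial T_j$, evaluated there, is the infinitesimal rotation about the direction in which $e^\sim_j$ leaves $v_k$, with one summand per occurrence of $e^\sim_j$ at $v_k$; equivalently, $\partial m_{\myPath_k}/\partial T_j$ is the skew matrix associated to the first-order change of the length of $e^\sim_j$ under moving $v_k$. Reading off the upper-triangular entries shows that $Dg$ is, up to sign and an invertible diagonal rescaling, the transpose of the map $\mathcal{V}\to\R^{E^\sim}$ sending a vertex motion to the induced first-order changes of the lengths of the edges in $E^\sim$. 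Hence $Dg$ is singular if and only if some nonzero vertex motion fixes all $E^\sim$-lengths to first order; by \ref{subconjecture:Rank} such a motion lies in $\ker M$ and is supported on the $E^=$-coordinates, i.e.\ $\ker M\cap\R^{E^=}\neq 0$, which by elementary linear algebra is equivalent to the $E^=$-columns of $M$ being linearly dependent. Finally, the Schl\"afli differential formula applied to each simplex shows that $S:=(\partial\Theta_e/\partial l_{e'})$ is symmetric ($-2$ times the Hessian of the volume functional in edge-length coordinates), and $M$ differs from $S$ by the invertible diagonal factor $\mathrm{diag}(-1/\sinh l_{e'})$, so the $E^=$-columns of $M$ are dependent if and only if the $E^=$-rows are — contradicting the hypothesis. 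Therefore $Dg$ is invertible; in fact this argument gives the equivalence ``$E^=$-rows of $M$ independent $\Longleftrightarrow$ $Dg$ invertible''.

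The main obstacle is the rigidity input in \ref{subconjecture:Rank}: one needs that the $1$-skeleton of every triangulation of a closed hyperbolic $3$-manifold is generically rigid in dimension $3$. The analogue in $\R^4$ is Kalai's theorem, but the $3$-dimensional statement does not obviously follow and may well be delicate (or open); this, rather than anything about cocycles, is the real content hidden behind ``generic''. A secondary technical point is the careful bookkeeping of orientations and of the occurrences of each $e^\sim_j$ in a gimbal loop needed to justify the identification of $Dg$ with the Jacobian of edge lengths under vertex motions — once that dictionary is set up, the remainder of \ref{subconjecture:gimbalLock} is the linear algebra above.
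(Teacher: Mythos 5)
The first thing to note is that the statement you are proving is exactly what the paper does \emph{not} prove: it is stated as Conjecture~\ref{conjecture:main}, supported only by numerical experiments (Remark~\ref{remark:experimentEdgePartition}) and by the strictly weaker Theorem~\ref{thm:hypStructAreSubmanifold}, which the author explicitly says only yields $\mathrm{rank}\, M \le m-3o$ (the $x^3=0$ example is given precisely to explain why the dimension of the solution set does not give the rank). So there is no proof in the paper to compare against, and your text should be judged as an attempt to settle an open conjecture. As such it is a sketch with real gaps, not a proof. For Part~\eqref{subconjecture:Rank}, you yourself flag the missing ingredient: that no nonzero motion of the vertices preserves all edge lengths to first order. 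You frame this as an open ``generic rigidity of the $1$-skeleton'' problem, apparently not noticing that the paper's own proof of Theorem~\ref{thm:hypStructAreSubmanifold} contains the claim that $DL$ is injective at every point of $U$, which is exactly that statement at an actual hyperbolic structure. The genuinely unproven step in your chain is therefore the other half: the dual-number/Calabi--Weil argument that \emph{every} element of $\ker M$ is induced by an infinitesimal vertex motion. That step needs careful justification (well-definedness of the first-order developing map, the prism relations to first order, equivariance after subtracting the coboundary), and note that if it were correct as stated it would, combined with Theorem~\ref{thm:hypStructAreSubmanifold}, prove Part~\eqref{subconjecture:Rank} \emph{unconditionally} -- a stronger statement than the author is willing to conjecture even generically, which should make you scrutinize it rather than assert it.

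For Part~\eqref{subconjecture:gimbalLock}, your identification of $Dg$ at the solution with (a sign/permutation twist of) the transpose of the map sending vertex motions to first-order changes of the $E^\sim$-lengths is consistent with the paper's single-vertex computation in Theorem~\ref{thm:singleVertexTriangulationGimbalLockCondition} and is plausible, but the conclusion you draw from it overreaches: you claim the argument gives the \emph{equivalence} ``$E^=$-rows of $M$ independent $\Leftrightarrow$ $Dg$ invertible,'' whereas the paper explicitly remarks, immediately after the conjecture, that the converse of Part~\eqref{subconjecture:gimbalLock} fails in examples (gimbal function invertible while the chosen rows of $M$ are dependent). Either those examples reflect only numerical near-dependence, or they show that your key lemma ``$\ker M = $ image of vertex motions'' fails at some structures; in either case the discrepancy must be resolved before the linear-algebra reduction can be accepted, since the converse direction of your equivalence uses precisely that lemma. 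Two smaller points: the symmetric matrix is the Hessian of $\sum_\Delta \bigl(2\,\mathrm{Vol}_\Delta + \sum_e l_e\theta_e\bigr)$, not $-2$ times the Hessian of the volume in length coordinates (the symmetry conclusion survives, the stated potential does not); and the bookkeeping that each end of $e^\sim_j$ at $v_k$ contributes exactly one correctly conjugated occurrence of $R_{T_j}$ in the gimbal loop is asserted, not checked, yet it is exactly where Definition~\ref{def:trigGimbalFunction} (setting $d_{k,l}=d_{k',l'}=T_j$) and the orientation conventions enter.
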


Note that if Part~\eqref{subconjecture:gimbalLock} were false, the choice of the partition $E(\trig)=E^\sim\cup E^=$ made in Step~\ref{step:submatrix} could be such that Step~\ref{step:intervalNewton} passes but the algorithm fails later in Step~\ref{step:avoidGimbal}.

\begin{remark}
Numerically, we found that the converse of Part~\eqref{subconjecture:gimbalLock} is not true, i.e., we found examples where the gimbal function is invertible even though the chosen rows of $M$ are linearly dependent.
\end{remark}

\begin{remark}
Compare the conjecture to the ideal case where we do not know in general whether every cusped, finite volume hyperbolic 3-manifold has a geometric triangulation (see \cite{PetronioWeeks:partiallyFlatTrig,LuoSchleimerTillmann:virtualGeometricTrig,Goerner:DodecahedralGeometricTriangulation}).
\end{remark}

\subsection{The space of hyperbolic structures}

Let $\trig$\negSpace{} be a finite, orientable triangulation with $o$ vertices and $m$ edges admitting a hyperbolic structure. We are able to prove that the solution set of the edge equations has dimension $3o$. This is weaker than Part~\eqref{subconjecture:Rank} of Conjecture~\ref{conjecture:main} since it implies that the rank of $M$ is at most $m-3o$ (for example, $x^3=0$ yields a $0$-dimensional submanifold of the 1-dimensional $\R$, yet the Jacobian has rank $0$ at $0$ instead of the expected $1-0=1$).

Earlier, we defined a hyperbolic structure on $\trig$ as a compatible assignment of an isometry class of finite simplices in $\H^3$ to each simpex in $\trig$\negSpace{}. The space of hyperbolic structures on $\trig\negSpace{}$ can be described in the following ways:
\begin{enumerate}
\item $\edgeEqSol(\trig)\subset\R_{>0}^m$, the set of all tuples $(l_{e_1},\dots,l_{e_m})$ fulfilling the conditions of Theorem~\ref{thm:hypStruct} (since the edge lengths of a hyperbolic structure yield a point in $\edgeEqSol(\trig)$ and determine the hyperbolic structure uniquely).
\item The space of geodesic homeomorphisms $\trig\to\mathcal{M}$
in a fixed homotopy class in $[\trig\negSpace{},\mathcal{M}]$ where $\mathcal{M}$ is a hyperbolic 3-manifold homeomorphic to $\trig\negSpace{}$.
By Mostow rigidity, this yields all hyperbolic structures. By fixing the homotopy class, a hyperbolic structure gives a unique geodesic homeomorphism, instead of multiple related by the isometries of $\mathcal{M}$. \label{item:geodHomeo}
\item The space of all developing homeomorphisms, i.e., $\rho$-equivariant homeomorphisms $d:\tilde{\trig}\to\H^3$ where $p$ is a fixed vertex of $\trig\negSpace{}$ and $\rho:\pi_1(\trig\negSpace{},p)\to\myPSL{2}{\C}$ is a fixed geometric representation. This gives the homeomorphism $\trig\negSpace{}\to\mathcal{M}$ in \eqref{item:geodHomeo} when letting $\mathcal{M}=\H^3/\Gamma$ where $\Gamma=\mathrm{Im}(\rho)$.
\end{enumerate}

\begin{theorem} \label{thm:hypStructAreSubmanifold}
Let $\trig$\negSpace{} be a finite, orientable triangulation with $o$ vertices and $m$ edges admitting a hyperbolic structure.
The space $\edgeEqSol(\trig)$ of hyperbolic structures on $\trig$ is a $3o$-dimensional smooth submanifold of \,$\R_{>0}^m$, is (non-canonically) diffeomorphic to an open subset $U\subset \left(\H^3\right)^o$, and has a canonical measure induced from $\left(\H^3\right)^o$.
\end{theorem}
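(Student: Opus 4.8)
The plan is to use description~\eqref{item:geodHomeo} of the space of hyperbolic structures via developing maps, together with the uniqueness of a geodesic simplex with prescribed vertices. Fix a hyperbolic $3$-manifold $\mathcal{M}$ homeomorphic to $\trig\negSpace{}$ and a geometric representation $\rho:\pi_1(\trig\negSpace{},p)\to\myPSL{2}{\C}$ with $\Gamma=\mathrm{Im}(\rho)$, so $\mathcal{M}=\H^3/\Gamma$. A point of $\edgeEqSol(\trig)$ together with the extra data of where it sends a fixed base vertex corresponds to a $\rho$-equivariant placement of the vertex set of $\tilde\trig$ in $\H^3$: since $\trig\negSpace{}$ has $o$ vertices, a $\Gamma$-equivariant assignment of points in $\H^3$ to the vertices of $\tilde\trig$ is the same as a point of $(\H^3)^o$ (after choosing lifts $\tilde v_1,\dots,\tilde v_o$ of the vertices). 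Conversely, given such a tuple of points, I span the geodesic simplex on each lifted simplex of $\tilde\trig$; this produces a developing map $d:\tilde\trig\to\H^3$, and the edge lengths read off from $d$ depend only on the original tuple, giving a well-defined map $\Phi$ from an open subset $U\subset(\H^3)^o$ to $\R_{>0}^m$ whose image lands in $\edgeEqSol(\trig)$.

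First I would identify $U$: it is the set of equivariant vertex placements for which every simplex of $\tilde\trig$ spans a positively oriented, non-degenerate finite geodesic simplex and for which the resulting $d$ is a homeomorphism. Non-degeneracy is an open condition on each simplex, and $d$ being a local (hence, by the $S^2$-vertex-link hypothesis and properness, global equivariant) homeomorphism is open as well, so $U$ is open; it is nonempty because the given hyperbolic structure provides a point of it. Next I would check that $\Phi:U\to\R_{>0}^m$ is smooth: the edge length between two points of $\H^3$ is a smooth function of the two points (it is $\mathrm{arccosh}$ of minus the Lorentzian inner product of lifts to the hyperboloid model, which is $>1$ away from the diagonal), so $\Phi$ is a composition of smooth maps. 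Then I would argue that $\Phi$ is surjective onto $\edgeEqSol(\trig)$: a point of $\edgeEqSol(\trig)$ is, by Theorem~\ref{thm:hypStruct} and description~\eqref{item:geodHomeo}, realized by some geodesic homeomorphism $\trig\negSpace{}\to\mathcal{M}$, which lifts to a developing homeomorphism, which in turn is determined by its vertex values; those vertex values lie in $U$ and map to the given edge lengths.

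The main structural point — and the step I expect to cost the most work — is that $\Phi$ descends to a diffeomorphism onto $\edgeEqSol(\trig)$, i.e. that $\Phi$ has constant rank $3o$ and identifies its image with $\edgeEqSol(\trig)$. For injectivity modulo nothing (I claim $\Phi$ itself is injective, not just up to the $\Isom^+(\H^3)$-action, because I have fixed $\rho$ rather than its conjugacy class): if two equivariant vertex placements give the same edge lengths on every simplex, then by uniqueness of a geodesic simplex with prescribed edge lengths up to isometry, the two developing maps differ on each simplex by an isometry; equivariance and connectedness force this isometry to be globally constant, and then $\rho$-equivariance with the \emph{same} $\rho$ forces it to commute with the non-elementary group $\Gamma$, hence to be the identity. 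For the rank, I would show $d\Phi$ is injective at every point of $U$: a tangent vector to $(\H^3)^o$ is an infinitesimal motion of the $o$ vertices; if all edge lengths are infinitesimally fixed, then each geodesic simplex is infinitesimally rigid, so the motion is an infinitesimal isometry on each simplex, and gluing plus equivariance with fixed $\rho$ again forces the infinitesimal isometry to be trivial. Thus $\Phi$ is an injective immersion of the $3o$-manifold $U$; to conclude it is an embedding onto the locally closed (indeed closed-in-an-open-subset-of-$\R_{>0}^m$) set $\edgeEqSol(\trig)$ I would invoke properness of $\Phi$ onto $\edgeEqSol(\trig)$ — a sequence of vertex placements whose edge lengths converge to those of an honest hyperbolic structure cannot escape to infinity in $(\H^3)^o$ modulo the action, because bounded edge lengths bound the diameters of the simplices and hence, using the fixed fundamental domain coming from $\Gamma$, keep the lifted base vertex in a compact set. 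This shows $\edgeEqSol(\trig)$ is a $3o$-dimensional smooth submanifold of $\R_{>0}^m$ and is diffeomorphic to $U\subset(\H^3)^o$, the diffeomorphism depending on the choice of $\rho$ and of vertex lifts, hence non-canonical. Finally, the canonical measure: $(\H^3)^o$ carries the $\Isom^+(\H^3)^o$-invariant volume form (the $o$-fold product of hyperbolic volume), and although the diffeomorphism $\edgeEqSol(\trig)\to U$ is non-canonical, any two choices differ by an element of $\Isom^+(\H^3)^o$ composed with a relabeling of lifts, which preserves this product volume; hence the pushforward measure on $\edgeEqSol(\trig)$ is well-defined independent of the choices, giving the canonical measure.
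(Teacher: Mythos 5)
Your overall framework coincides with the paper's: fix a geometric representation $\rho$ and vertex lifts, parametrize $\rho$-equivariant developing maps by the images of the lifted vertices so that hyperbolic structures correspond to an open set $U\subset(\H^3)^o$, show the edge-length map is smooth, and obtain the canonical measure from invariance of the product volume under the relevant isometries. Where you genuinely diverge is in the two hard steps. The paper proves both continuity of the inverse and injectivity of the differential by \emph{re-developing} from standard position and pinning down the conjugating M\"obius transformation $h$ via attracting fixed points, so that the redeveloped vertices and $h$ depend continuously (and differentiably) on the edge lengths. You instead prove injectivity of $d\Phi$ by infinitesimal rigidity of each non-degenerate hyperbolic simplex plus a Killing-field argument (the Killing fields agree across shared faces, hence are globally equal, hence $\Gamma$-invariant, hence zero since the centralizer of a cocompact Kleinian group is trivial). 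That route is correct and can be made rigorous cleanly in the hyperboloid model: a first-order length-preserving motion of four linearly independent points on the hyperboloid extends to an element of $\mathfrak{so}(3,1)$, and a Killing field vanishing at the three vertices of a non-degenerate triangle vanishes identically. Your injectivity argument for $\Phi$ itself (constant isometry commuting with the non-elementary $\Gamma$) is likewise fine, and in fact fills in a point the paper's bijection claim leaves implicit.

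The gap is in the properness step, which you need to promote your injective immersion to an embedding; this is exactly the paper's claim that $l^{-1}$ is continuous, and your justification does not establish it. ``Using the fixed fundamental domain coming from $\Gamma$'' does not confine $d(\tilde v_1)$: nothing ties the developed image of a fixed lift to any chosen fundamental domain, and there is no residual action to argue ``modulo'' --- once $\rho$ and the lifts are fixed, $\Phi$ is injective on the nose, so escape to infinity must be excluded absolutely, not up to $\Gamma$. A correct argument runs as follows: for $\gamma\in\pi_1(\trig,p)$ represented by an edge loop at the base vertex, developing the lifted loop bounds the displacement of $x=d(\tilde v_1)$ under $\rho(\gamma)$ by the sum of the corresponding edge lengths, which is bounded along a sequence whose lengths converge in $\edgeEqSol(\trig)$; applying this to two non-commuting elements of $\Gamma$ (loxodromic, with axes sharing no endpoint at infinity, by discreteness) confines $x$ to a compact set, since the set of points displaced a bounded amount by such an element is a metric neighborhood of its axis and two such neighborhoods of axes without common endpoints intersect in a bounded set; bounded edge lengths and connectivity of the $1$-skeleton then confine all remaining vertices, and a subsequential limit of the placements still lies in $U$ because its simplices are non-degenerate and positively oriented (the same fact you invoke for openness of $U$). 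With that repair your route works; alternatively one can avoid properness altogether and prove continuity of the inverse directly, as the paper does, via the standard-position redevelopment and the fixed-point construction of $h$.
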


\begin{proof} We call a $\rho$-equivariant geodesic map $d:\tilde{\trig}\to\H^3$ a developing map.\\
{\bf Claim:} Fix a vertex $p$ of $\trig\negSpace{}$ and a geometric representation $\rho:\pi_1(\trig\negSpace{},p)\to\myPSL{2}{\C}$. Then developing maps are in 1-1 correspondence to $(\H^3)^o$.\\
Pick a lift $\tilde{v_1},\dots, \tilde{v_o}$ in $\tilde{\trig}\negSpace{}$ of each vertex of $v_1,\dots,v_o$ of $\trig\negSpace{}$. By $\rho$-equivariance, the images of all vertices of $\tilde{\trig}$ are determined by $(d(\tilde{v_1}), \dots, d(\tilde{v_o}))\in(\H^3)^o$. A geodesic map is determined uniquely by the image of all the vertices.\\
{\bf Claim:} Under the above assumptions, the homeomorphisms among the developing maps form an open subset $U\subset(\H^3)^o$. We have a bijection $l:U\to\edgeEqSol(\trig)$.\\
By invariance of domain, a developing map is a homeomorphism if and only if it is an embedding. A developing map is an embedding if and only if the image of each simplex is positively oriented. The condition on a single simplex in $\tilde{\trig}\negSpace$ to be positively oriented is open in the four vertex positions of the simplex. As a function of $(x_1,\dots,x_o)\in (\H^3)^o$, each of these four vertex positions is given by some $m x_k$ where $m$ is a hyperbolic isometry in $\Gamma=\mathrm{Im}(\rho)$. Thus, the condition is also open in $(\H^3)^o$. By $\rho$-equivariance, it is sufficient to check this for one lift $\tilde{\Delta}$ of each of the finitely many simplices $\Delta$ of $\trig\negSpace{}$, so $U$ is open.\\
{\bf Claim:} The map $L=i\circ l: U\to\R^m_{>0}$ is smooth where $i: \edgeEqSol(\trig)\hookrightarrow \R^m_{>0}$ is the inclusion.\\
The distance function $\H^3\times\H^3\to\R_{\geq 0}$ is smooth for all $(x,y)$ with $x\not= y$.\\
{\bf Claim:} The inverse $l^{-1}:\edgeEqSol(\trig)\to U$ is continuous.\\
Fix a simplex $\tilde{\Delta}$ in $\tilde{\trig}$ and $\sigma\in \permA_4$. Given a point in $\edgeEqSol(\trig)$, let us consider the developing map $d':\tilde{\trig}\to\H^3$ obtained by starting with $\tilde{\Delta}$ in $\sigma$-standard position (see Definition~\ref{def:standardPosSimplex}) and developing simplex by simplex.
Note that $d'$ is equivariant with respect to a conjugate but different representation $\rho':\pi_1(\trig\negSpace{}, p)\to\myPGL{2}{\C}$.
In other words, there is a unique element $h\in\myPGL{2}{\C}$ such that $\rho(\gamma)=h\circ \rho'(\gamma)\circ h^{-1}$ for all $\gamma\in\pi_1(\trig\negSpace{}, p)$ and $d=h\circ d'$ is $\rho$-equivariant.
We can compute $h$ as follows:
 fix $\gamma_1,\gamma_2,\gamma_3\in\pi_1(\trig\negSpace{},p)$ such that the attractive fixed points $p_i\in\partial \H^3$ of $\rho(\gamma_i)$ are distinct. Let $p'_i\in\partial\H^3$ be the attractive fixed points of $\rho'(\gamma_i)$. Then $h$ is the unique Moebius transformation with $p_i=h(p'_i)$. Note that the all $d'(\tilde{v_k})$ as well as all $\rho'(\gamma_i)$ and thus $h$ depend continuously on the point in $\edgeEqSol(\trig)$. Thus, 
 $(d(\tilde{v_1}), \dots, d(\tilde{v_o}))\in U\subset (\H^3)^o$ depends continuously on the point in $\edgeEqSol(\trig)$.\\
{\bf Claim:} The differential $DL$ of $L: U\to\R^m_{>0}$ is injective at every point in $U$.\\
Assume that $DL$ has non-trivial kernel. That is, there is a path $\gamma:(-1,1)\to U$ with $(d\gamma(t)/dt)_{|t=0}\not =0$ but $(d(L\circ\gamma)(t)/dt)_{|t=0}=0$. In other words, we have a 1-parameter family of developing maps $d:\tilde\trig\to \H^3$ where at least one of the points $d(\tilde{v_k})$ is moving with non-zero velocity but the derivatives of all the edge lengths is zero. As we shall see, this cannot happen.\\
From the edge lengths $(L\circ\gamma)(t)\in\edgeEqSol(\trig)$, we can construct a 1-parameter family of developing maps $d'$ as above. Since the derivatives of the lengths is zero, the vertices $d'(\tilde{v_k})$ all have velocity zero. It is not hard to see that the entries of the $h\in\myPGL{2}{\C}$ from above also have zero derivative. Thus, the points $d(\tilde{v_k})$ have zero velocity.\\
{\bf Claim:} The measure on $\edgeEqSol(\trig)$ induced from $\left(\H^3\right)^o$ is independent of the above choices.\\
A different choice of lifts $\tilde{v_1},\dots,\tilde{v_o}$ of vertices corresponds to the action of $(\H^3)^o$ by an element in $\Gamma^o$ and thus does not change the induced measure. A different choice of $\rho':\pi_1(\trig\negSpace{},p)\to\myPGL{2}{\C}$ is conjugate to $\rho$ by an element $h\in\myPGL{2}{\C}$. The measure on $(\H^3)^o$ is invariant under the action of $h$.
\end{proof}

\subsection{1-vertex triangulations} \label{sec:singleVertexTrigs} Consider a 1-vertex triangulation $\trig$ admitting a hyperbolic structure. Let $p$ be the vertex of $\trig$ and $\rho:\pi_1(\trig\negSpace{}, p)\to\myPSL{2}{\C}$ be a geometric representation. Note that each edge $e$ of $\trig$ forms a loop in $\pi_1(\trig\negSpace{}, p)$ and denote by $F_e=\{x\in \C P^1 : x = \rho(e)(x)\}$ the set of the corresponding fixed points.
\begin{theorem} \label{thm:singleVertexTriangulationGimbalLockCondition}
Let $E^\sim = \{e^\sim_1, e^\sim_2, e^\sim_3\}$ and $E^=$ be a partition of the edges of $\trig\negSpace{}$. Gimbal lock occurs for every hyperbolic structure on $\trig$ if there are $i\not= j$ with $F_{e^\sim_i}=F_{e^\sim_j}$. Otherwise, gimbal lock is generically avoided.
\end{theorem}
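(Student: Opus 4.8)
The plan is to reduce the statement to a linear–algebra question about the derivative $Dg$ of the gimbal function at the given hyperbolic structure and then settle that question geometrically. Throughout, $\trig$\negSpace{} triangulates a closed hyperbolic $3$-manifold, so $\rho$ is cocompact and each $\rho(e^\sim_j)$ is loxodromic; write $A_j$ for its axis, so that $F_{e^\sim_j}$ is the unordered pair of ideal endpoints of $A_j$ and the hypothesis ``$F_{e^\sim_i}=F_{e^\sim_j}$'' means exactly $A_i=A_j$.

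\emph{Computing $Dg$.} Fix a gimbal loop $\myPath$ for the single vertex $p$. Since $\trig$\negSpace{} has one vertex, both ends of each $e^\sim_j$ lie at $p$, so the factor $R_{T_j}$ occurs exactly twice in the gimbal matrix $m_\myPath$. For $v\in T_{\myJ}\H^3\cong\R^3$ let $N_v\in\mathfrak{so}(3)$ be the matrix $w\mapsto v\times w$, so that $R_\omega=\exp(\omega N_{z})$ for $z=(0,0,1)$ and $R'_{2\pi}=N_{z}$. Let $C_1$ (resp.\ $C_2$) be the product of all factors of $m_\myPath$ preceding the first (resp.\ second) occurrence of $R_{T_j}$, evaluated at $(T_1,T_2,T_3)=(2\pi,2\pi,2\pi)$ where the other $R_{T_{j'}}$ are trivial; these lie in $\mySO{3}$. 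Since $m_\myPath(2\pi,2\pi,2\pi)=\mathrm{Id}$ by Lemma~\ref{lemma:gimbalMatrixId}, the formula for $\partial m_{\myPath_k}/\partial T_i$ in Section~\ref{sec:algo} collapses to
$$\left.\frac{\partial m_\myPath}{\partial T_j}\right|_{(2\pi,2\pi,2\pi)}=C_1 N_{z}C_1^{-1}+C_2 N_{z}C_2^{-1}=N_{\hat u_j}+N_{\hat w_j},\qquad \hat u_j:=C_1 z,\quad \hat w_j:=C_2 z.$$
Tracing standard positions (Definition~\ref{def:standardPosSimplex}) through the construction of the two prisms over $e^\sim_j$ identifies $\hat u_j$ and $\hat w_j$ as the unit tangents at the lift $\tilde p=\myJ$ pointing toward $\rho(e^\sim_j)\myJ$ and toward $\rho(e^\sim_j)^{-1}\myJ$, i.e.\ the directions in which the two developed copies of $e^\sim_j$ incident to $\myJ$ leave $\myJ$. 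Identifying the target $\R^3$ of $g$ with $\mathfrak{so}(3)$, we conclude: $Dg$ is invertible at the hyperbolic structure if and only if the vectors $n_j:=\hat u_j+\hat w_j$ ($j=1,2,3$) are linearly independent in $T_{\myJ}\H^3$. Pinning down $\hat u_j,\hat w_j$ precisely, with mutually consistent orientations, is the step I expect to require the most care.

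\emph{The case $A_i=A_j$.} I would then prove the geometric lemma: for any loxodromic $g$ with axis $A$ and any $x\in\H^3$, if $\hat u(x),\hat w(x)\in T_x\H^3$ are the unit tangents at $x$ toward $gx$ and toward $g^{-1}x$, then $\hat u(x)+\hat w(x)$ is a multiple of the unit tangent $\hat v_A(x)$ at $x$ along the perpendicular from $x$ to $A$, and vanishes if $x\in A$. Indeed, the $\pi$-rotation $\tau$ of $\H^3$ about the geodesic through $x$ perpendicular to $A$ fixes $x$ and satisfies $\tau g\tau^{-1}=g^{-1}$ (it reverses $A$ and is orientation preserving, hence negates both the translation length and the rotation angle of $g$). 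Thus $d\tau_x$ interchanges $\hat u(x)$ and $\hat w(x)$, and as $d\tau_x$ is the $\pi$-rotation of $T_x\H^3$ about $\R\hat v_A(x)$, their sum is the $\hat v_A(x)$-component of either. Applying this at $x=\myJ$: if $A_i=A_j$ for some $i\neq j$, then $n_i$ and $n_j$ both lie in the line $\R\hat v_{A_i}(\myJ)$ (or both vanish), hence are linearly dependent, so $Dg$ is singular --- and this argument holds verbatim at every hyperbolic structure.

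\emph{The generic case.} Assume now $A_1,A_2,A_3$ are pairwise distinct. By Theorem~\ref{thm:hypStructAreSubmanifold}, fixing a geometric $\rho$, the space of hyperbolic structures is an open set $U\subset\H^3$, a structure corresponding to the position $x$ of the developed vertex lift; unwinding the normalization, $\det Dg$ at the structure $x$ equals, up to a nonzero scalar, $F(x):=\det\bigl[\,n_1(x)\mid n_2(x)\mid n_3(x)\,\bigr]$ in a fixed trivialization of $T\H^3$, where $n_j(x)$ is the sum of the unit tangents at $x$ toward $\rho(e^\sim_j)^{\pm1}x$. As $\rho(e^\sim_j)\neq\mathrm{id}$ (edges are nondegenerate), $F$ is real-analytic on the connected manifold $\H^3$, so it suffices to find one $x_0\in\H^3$ with $F(x_0)\neq0$. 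For $x\notin A_j$ the three points $x,\rho(e^\sim_j)x,\rho(e^\sim_j)^{-1}x$ are not collinear (their common geodesic would have to be $A_j$), so there $n_j(x)\neq0$ and $n_j(x)\parallel\hat v_{A_j}(x)$; hence it is enough to find $x_0$, off all three axes, with $\hat v_{A_1}(x_0),\hat v_{A_2}(x_0),\hat v_{A_3}(x_0)$ linearly independent. If no such $x_0$ existed these perpendicular directions would be everywhere coplanar; letting $x$ tend to a generic $q\in A_1$ from all directions transverse to $A_1$ would force $\hat v_{A_2}(q)$ and $\hat v_{A_3}(q)$ to be perpendicular to $A_1$, i.e.\ the geodesic from $q$ to the nearest point of $A_2$ to be a common perpendicular of $A_1$ and $A_2$ for every $q\in A_1$ --- impossible since $A_1\neq A_2$. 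Thus $F\not\equiv0$, its zero set is a closed set of measure zero in $\H^3$ and hence in $U\cong\edgeEqSol(\trig)$, and gimbal lock is generically avoided.
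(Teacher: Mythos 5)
Your route is essentially the paper's: you reduce invertibility of $Dg$ at the structure to linear independence of the three vectors $n_j=\hat u_j+\hat w_j$ built from the two directions in which the edge $e^\sim_j$ leaves the developed vertex, show $n_j$ is a multiple of the perpendicular direction from the vertex to the axis $A_j$ (vanishing on $A_j$), conclude gimbal lock at every structure when two axes coincide, and treat the distinct-axes case by real-analyticity of the determinant on $U\subset\H^3$. Your $\pi$-rotation conjugation argument for $n_j\parallel \hat v_{A_j}$ is a genuine proof of what the paper only indicates with a figure, and your formula for $\partial m_\myPath/\partial T_j$ at $(2\pi,2\pi,2\pi)$ agrees with the paper's six-rotation computation.

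There is, however, a genuine gap in the last step of the generic case. From the coplanarity $\det\bigl[w,\hat v_{A_2}(q),\hat v_{A_3}(q)\bigr]=0$ for every $w$ perpendicular to $A_1$ at $q$ you infer that $\hat v_{A_2}(q)$ and $\hat v_{A_3}(q)$ are perpendicular to $A_1$; but this condition only says that $\hat v_{A_2}(q)\times\hat v_{A_3}(q)$ is tangent to $A_1$, so the correct dichotomy at each $q$ is: either both vectors are perpendicular to $A_1$, or they are parallel to each other. The second alternative is not vacuous even with pairwise distinct axes: if $A_1$ happens to be the common perpendicular of $A_2$ and $A_3$, then for every $q\in A_1$ the nearest point of $A_2$ (and of $A_3$) to $q$ is the foot of $A_1$ on that axis, so $\hat v_{A_2}(q)$ and $\hat v_{A_3}(q)$ are both tangent to $A_1$ and your limiting argument at $A_1$ yields no contradiction at all. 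The gap is repairable along your lines: if the parallel alternative holds at two distinct generic points of $A_1$, each such point lies on a common perpendicular of $A_2$ and $A_3$, and since two distinct geodesics in $\H^3$ have at most one common perpendicular this forces $A_1$ to be that common perpendicular; if instead the perpendicularity alternative holds at two distinct points you get two distinct common perpendiculars of $A_1$ and $A_2$, your contradiction. Repeating this at $A_2$ and $A_3$ either finishes the proof or forces the three axes to be mutually perpendicular and concurrent at a point $y$, and in that configuration coplanarity fails at points of the ``diagonal'' geodesic near $y$: in normal coordinates at $y$ the three nearest-point directions converge, up to scale, to $(0,-1,-1)$, $(-1,0,-1)$, $(-1,-1,0)$, whose determinant is $-2\neq 0$. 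Note that the paper leaves exactly this nonvanishing to the reader (``it is not hard to see that there is some $x$ with $f(x)\not=0$''), but as written your deduction does not establish it.
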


\begin{remark}
It is imaginable that there is a 1-vertex triangulation such that $|\{F_e : e\in E(\trig)\}|=2$ and gimbal lock occurs for each choice of $E^\sim$ giving a counterexample to Conjecture~\ref{conjecture:main}. 
\end{remark}

\begin{figure}[h]
\begin{center}
\scalebox{0.5}{
\input{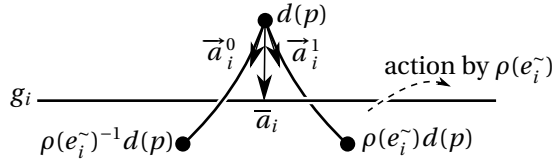}}
\end{center}
\caption{Edge in 1-vertex triangulation.\label{fig:edgeSingVertTrig}}
\end{figure}

\begin{proof}
The edges $e^\sim_i$ become geodesics with a potential kink at the image of $p$ in $\mathcal{M}$. Let $\overrightarrow{a}_i^0$ and $\overrightarrow{a}_i^1$ be the directions (of unit length) in which the two ends of $e^\sim_i$ approach $d(p)\in\H^3$ in the developing embedding $d:\tilde{\trig}\to\H^3$. We can identify the tangent space at $d(p)$ with Euclidean 3-space such that the gimbal matrix $m_\Gamma$ is the product of six rotations (the order depending on the choice of gimbal loop $\Gamma$)
$$R_{T_1}^{\overrightarrow{a}_1^0},~R_{T_1}^{\overrightarrow{a}_1^1},~R_{T_2}^{\overrightarrow{a}_2^0},~R_{T_2}^{\overrightarrow{a}_2^1},~R_{T_3}^{\overrightarrow{a}_3^0},~\mbox{and}~R_{T_3}^{\overrightarrow{a}_3^1}$$
where $R_\omega^{\overrightarrow{v}}\in\mySO{3}$ denotes the rotation about $\overrightarrow{v}$ by angle $\omega$. Let $\overrightarrow{a}_{i,k}^j$ denote the $k$-th component of $\overrightarrow{a}_i^j$. At the point where all $T_i=2\pi$, we have
$$\frac{\partial m_\Gamma}{\partial T_i} = \frac{R_{T_i}^{\overrightarrow{a}_i^0}}{\partial T_i} + \frac{R_{T_i}^{\overrightarrow{a}_i^1}}{\partial T_i}=
\left(\begin{array}{ccc}
0 & -\overrightarrow{a}_{i,2}^0 & \overrightarrow{a}_{i,1}^0 \\
\overrightarrow{a}_{i,2}^0 & 0 & -\overrightarrow{a}_{i,0}^0 \\
-\overrightarrow{a}_{i,1}^0 & \overrightarrow{a}_{i,0}^0 & 0
\end{array}\right) +
\left(\begin{array}{ccc}
0 & -\overrightarrow{a}_{i,2}^1 & \overrightarrow{a}_{i,1}^1 \\
\overrightarrow{a}_{i,2}^1 & 0 & -\overrightarrow{a}_{i,0}^1 \\
-\overrightarrow{a}_{i,1}^1 & \overrightarrow{a}_{i,0}^1 & 0
\end{array}\right),
$$
$$\mbox{so}~\quad \frac{\partial g}{\partial T_i} = \left(-\big(\overrightarrow{a}_{i,2}^0+\overrightarrow{a}_{i,2}^1\big), \big(\overrightarrow{a}_{i,1}^0+\overrightarrow{a}_{i,1}^1\big), -\big(\overrightarrow{a}_{i,0}^0+\overrightarrow{a}_{i,0}^1\big)\right)=\left(-\overline{a}_{i,2}, \overline{a}_{i,1}, -\overline{a}_{i,0}\right)$$
where $\overline{a}_{i}=\overrightarrow{a}_i^0+\overrightarrow{a}_i^1.$ Thus, $Dg=\left(\partial g/\partial T_1, \partial g/\partial T_2, \partial g/\partial T_3\right)$ is invertible if $\overline{a}_1, \overline{a}_2,$ and $\overline{a}_3$ are linearly independent. Figure~\ref{fig:edgeSingVertTrig} shows that $\overline{a}_i$ is pointing from $d(p)$ to the geodesic $g_i$ spanned by $F_{e^\sim_i}$ (respectively pointing to $F_{e^\sim_i}$ if $|F_{e^\sim_i}|=1$) and $\overline{a}_i=0$ if it lies on that geodesic.\\
{\bf Claim:} Given a point $x\in\H^3$, consider the directions $\overline{a}_i$ from $x$ to the point on $g_i$ closest to $x$ (respectively pointing to $F_{e^\sim_i}$ if $|F_{e^\sim_i}|=1$). The set $V$ of $x$ where these directions lie in a plane is closed and has measure zero if all sets $F_{e^\sim_i}$ are distinct. Otherwise $V=\H^3$.\\
Let $f:\H^3\setminus \cup_i g_i\to \R, x\mapsto \omega(\overline{a}_1,\overline{a}_2,\overline{a}_3)$ where $\omega$ is the volume form on $\H^3$. Since $\overline{a}_i$ is the Hodge dual to the gradient of the distance of $x$ to $g_i$, $f$ is analytic. It is not hard to see that there is some $x$ with $f(x)\not = 0$ as long as no two $F_{e^\sim_i}$ and $F_{e^\sim_j}$ are equal. Thus $f^{-1}(0)$ and $V=f^{-1}(0)\cup \cup_i g_i$ have measure zero.
\end{proof}

\section{Appendix: The uncited theorem HIKMOT relies on} \label{sec:hikmotGap}

As pointed out in the introduction, the full system of equations for finding a hyperbolic structure on a triangulation fails to have rank equal to the number of variables in both the finite and ideal case. Thus, verification by interval methods in the ideal case also relies on a theorem showing that if a suitable subsystem is fulfilled, all equations are fulfilled. Unfortunately, the paper \cite{hikmot} did not cite this crucial theorem correctly. We want to point out that the algorithm in \cite{hikmot} is correct and its implementation can be trusted since the theorem it relies on has been proven by \cite[Lemma~2.4]{moser}. However the theorem from \cite{NeumannZagier} cited in \cite{hikmot} is insufficient since it assumes hyperbolicity to begin with and its conclusion is too weak. Being the counterpart of the result in this paper in the ideal case, we will briefly state the correct theorem missing from \cite{hikmot}.

Recall, that given an ideal triangulation with $o$ vertices, Thurston \cite[Chapter 4]{ThurstonNotes} gave compatibility equations in one complex variable $z_1,\dots,z_m$ per tetrahedron which are all of the form
$$\sum_{j=1}^m a_{r,j} \log\Big(z_j\Big) + b_{r,j} \log\Big(\frac{1}{1-z_j}\Big) + c_{r,j}\log\Big(1-\frac{1}{z_j}\Big) - 2\pi \myI d_r = 0$$
such that a solution with $\myIm(z_j)>0$ (called a geometric solution) yields a complete hyperbolic structure on the manifold obtained by filling in some cusps. There is one such equation with $d_r=1$ for each of the $m$ edges, one with $d_r=1$ for each filled cusped and one (sometimes two are given, but it is easy to see that one is sufficient) with $d_r=0$ for each unfilled cusp. Note that this system of equations consists of $m+o$ equations in $m$ variables and thus is overdetermined. To find a non-overdetermined system, let $\alpha_{r,j}=a_{r,j}-c_{r,j}$ and $\beta_{r,j}=-b_{r,j}+c_{r,j}$ and consider the $m\times m$-matrices $A=(\alpha_{r,j})$ and $B=(\beta_{r,j})$ where $r$ ranges over the edge equations:
\begin{theorem}
The matrix $(A|B)$ has rank $m-o$. Pick $m-o$ linearly independent rows of $(A|B)$ and consider the system of equations consisting of the corresponding $m-o$ edge equations and the $o$ cusp equations.
\begin{enumerate}
\item Any solution to this system fulfills the remaining $o$ edge equations as well.\label{item:moser1}
\item Near a geometric solution, the Jacobian of this system of equations is invertible.\label{item:moser2}
\end{enumerate}
\end{theorem}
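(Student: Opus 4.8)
The plan is to derive the theorem from the symplectic properties of the Neumann–Zagier datum of the triangulation, exactly as in \cite[Lemma~2.4]{moser} (the combinatorial input going back to \cite{NeumannZagier,NeumannComb}). Write $u_j=\log z_j$; using the per-tetrahedron identity $z_jz_j'z_j''=-1$ with $z_j'=(1-z_j)^{-1}$, $z_j''=1-z_j^{-1}$, i.e.\ $\log z_j+\log z_j'+\log z_j''=\pi\myI$ on the branch of a geometric solution, the $u_j$ are local coordinates on the gluing variety and $\log z_j',\log z_j''$ are analytic functions of them. I would first record the two structural facts. (i) The edge rows $(\alpha_{r,\bullet}\,|\,\beta_{r,\bullet})$ of $(A\,|\,B)$ are pairwise isotropic for the standard symplectic form $J=\left(\begin{smallmatrix}0&I\\-I&0\end{smallmatrix}\right)$ on $\R^{2m}$, equivalently $AB^{T}$ is symmetric; this is a direct consequence of ``angles sum to $2\pi$ / shapes multiply to $1$ around each edge'' together with the per-tetrahedron identity. (ii) $\mathrm{rank}(A\,|\,B)=m-o$, and the $o$-dimensional space of linear relations among the edge rows is spanned by the combinatorial vectors $v^{(1)},\dots,v^{(o)}$, where $v^{(c)}_r$ is the number of ends of the edge $e_r$ lying at the cusp $c$. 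Fact (ii) is where $\chi=0$ of the cusp cross-sections enters: the $v^{(c)}$-weighted sum of the edge equations is the total vertex angle (and total holonomy) around the cusp-$c$ torus, and the rank drop comes from $V-\tfrac{3}{2}F+F=0$ for its induced triangulation.

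\emph{Part (1).} The point is that $v^{(c)}$ promotes from a relation among rows to a relation among the full edge equations, constants included, once the cusp equations of cusp $c$ are imposed. Concretely $\sum_r v^{(c)}_r E_r$ telescopes: its imaginary part records the sum of the vertex angles of the cusp-$c$ torus (each such angle being an edge equation's imaginary part) and its real part the translation part of the cusp torus' Euclidean developing map; both are pinned down by the meridian/longitude holonomy of cusp $c$, i.e.\ by the cusp equation(s) attached to $c$ (for an unfilled cusp one uses, as the excerpt notes, that $\mathrm{longitude}=0$ already forces $\mathrm{meridian}=0$). The integer right-hand sides are compatible because of the Gauss–Bonnet/Euler-characteristic count $\pi\cdot\#(\text{triangles})=2\pi\cdot\#(\text{vertices})$ on the cusp torus. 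Hence, on a common solution of all $o$ cusp equations and of $m-o$ edge equations whose rows are independent in $(A\,|\,B)$, the $o$ relations $v^{(c)}$ express each of the remaining $o$ edge equations as a consequence, so those hold too.

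\emph{Part (2).} Here I would invoke the geometric hypothesis through the smoothness of the deformation (Dehn surgery) space. At a geometric solution the gluing variety is a smooth complex $o$–manifold near that point (Neumann–Zagier, refining Thurston \cite{ThurstonNotes}); by Part~(1) restricting to the chosen $m-o$ independent edge equations cuts out the same smooth $o$–manifold near the solution. It remains to see that the differentials of the $o$ cusp/filling equations are transverse to its tangent space, i.e.\ that the full $m\times m$ Jacobian is nonsingular. This is the Neumann–Zagier derivative computation: using the isotropy identity $(A\,|\,B)\,J\,(A\,|\,B)^{T}=0$ and the analogous pairing between edge and cusp rows, the obstruction to invertibility is a bilinear form whose non-degeneracy at a geometric solution follows from $\myIm z_j>0$ for every $j$ — the same positivity that makes this point smooth and endows the cusp shapes with honest $\C$–parameters.

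\emph{Main obstacle.} The substantive step is Part~(2): the non-degeneracy of the relevant Jacobian is not formal and is precisely where the geometric hypothesis is used, via the positivity of the imaginary parts of the shapes in the Neumann–Zagier symplectic identities. The rank statement and Part~(1), by contrast, are combinatorial/topological, resting on $\chi=0$ of the cusp tori.
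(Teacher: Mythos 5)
First, note that the paper does not prove this theorem itself: it is stated in the appendix precisely to record the result it attributes to \cite[Section~2.3.1]{moser}, and the surrounding discussion pinpoints the delicate issue, namely the additive constants $d_r$ (the exact $2\pi\myI$'s), which Neumann's combinatorial Theorem~4.1 does not control --- it yields the remaining edge equations only modulo $2\pi\myI\Q$. Your proposal founders on exactly this point. In Part~(1) you claim that the relation $\sum_r v^{(c)}_r E_r$ holds ``constants included'' once the cusp equations at $c$ are imposed. That is not how the constants get pinned: passing to the derivative (dilation) part of the similarity holonomy of the cusp torus, the commutator of the meridian and longitude holonomies dies automatically because $\C^*$ is abelian, so the weighted multiplicative relation $\prod_r H_r^{v^{(c)}_r}=1$ holds whether or not the cusp equations do, and conversely the cusp equations contribute nothing toward fixing logarithm branches. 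What the exact additive relation requires is branch control on each tetrahedron: $\log z_j+\log z'_j+\log z''_j=\pi\myI$ holds only when $\myIm(z_j)>0$; for a solution with some $\myIm(z_j)<0$ the telescoped identity is off by nonzero multiples of $2\pi\myI$, and your argument degenerates to exactly the ``mod $2\pi\myI$'' statement the appendix says is insufficient. So you must either introduce and use a positivity hypothesis --- for instance, with all $\myIm(z_j)>0$ the remaining $o$ angle sums are positive, congruent to $0$ mod $2\pi$ by the linear-algebra step, and total $2\pi o$ because each ideal tetrahedron has total dihedral angle $2\pi$, hence each equals $2\pi$, while the real parts are exact because the affine discrepancy is purely imaginary --- or follow Moser's actual argument; as written, Part~(1) is not proved.

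A secondary point: your ``fact (ii)'' conflates the easy and the hard inequality. The Euler-characteristic computation on the cusp tori only shows that the vectors $v^{(c)}$ \emph{are} relations among the rows of $(A|B)$ (one checks they are independent, so rank $\leq m-o$); the content of the rank statement, which is part of the theorem, is the reverse inequality, and that requires either hyperbolicity as in \cite{NeumannZagier} or Neumann's chain-complex computation in \cite{NeumannComb} --- it cannot be extracted from $\chi=0$. Citing it is legitimate, but your sketch presents it as if Gauss--Bonnet sufficed. Part~(2) is acceptable as a reduction to the Neumann--Zagier nondegeneracy results at a geometric solution (there $\myIm(z_j)>0$ holds, the chosen edge rows cut out the smooth $o$-dimensional deformation variety, and the cusp equations restrict to a submersion), with one small slip: that the chosen $m-o$ edge equations locally cut out the same set as all $m$ of them does not follow from Part~(1), which involves the cusp equations, but from the identical affine relations valid on the region where all $\myIm(z_j)>0$.
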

Note that \eqref{item:moser1} is sufficient for an algorithm to prove a manifold to be hyperbolic. Statement \eqref{item:moser2} ensures that such an algorithm succeeds if given a solution close enough to the geometric one.

\cite{NeumannZagier} states the rank of $(A|B)$ but assumes hyperbolicity. Neumann revisted the result in \cite {NeumannComb} to give a purely combinatorial statement where the rank of $(A|B)$ occurs as rank of the map $\beta$ in a certain chain complex. However, even Theorem~4.1 in \cite{NeumannComb} only implies that the remaining $o$ edge equations in the above theorem are fulfilled modulo $2\pi\myI \Q$ since it does not involve the $d_r$ of the edge equations. For a proof of the above Theorem, see \cite[Section~2.3.1]{moser}.

\bibliographystyle{hamsalphaMatthias}
\bibliography{verifyingClosedManifolds}

\end{document}